\newtheorem{theorem}{Theorem}%  meant for continuous numbers
\newtheorem{remark}{Remark}%
\newtheorem{lemma}{Lemma}
\newtheorem{definition}{Definition}%
\begin{document}

\title[Control of a Lotka-Volterra System]{Control of a Lotka-Volterra System with Weak Competition}

%%=============================================================%%
%% GivenName	-> \fnm{Joergen W.}
%% Particle	-> \spfx{van der} -> surname prefix
%% FamilyName	-> \sur{Ploeg}
%% Suffix	-> \sfx{IV}
%% \author*[1,2]{\fnm{Joergen W.} \spfx{van der} \sur{Ploeg} 
%%  \sfx{IV}}\email{iauthor@gmail.com}
%%=============================================================%%

\author*[1,2]{\fnm{Maicon} \sur{Sonego}}\email{mcn.sonego@unifei.edu.br}
\equalcont{These authors contributed equally to this work.}
\author[2,3,4]{\fnm{Enrique} \sur{Zuazua}}\email{enrique.zuazua@fau.de}
\equalcont{These authors contributed equally to this work.}

%\equalcont{These authors contributed equally to this work.}

\affil*[1]{\orgdiv{Instituto de Matemática e Computação}, \orgname{Universidade Federal de Itajubá}, \orgaddress{\city{Itajubá}, \postcode{37500-903}, \state{Minas Gerais}, \country{Brazil}}}

\affil[2]{\orgdiv{Chair for Dynamics, Control, Machine Learning, and Numerics, Alexander von Humboldt-Professorship, Department of Mathematics}, \orgname{Friedrich-Alexander-Universität Erlangen-Nürnberg}, \orgaddress{ \city{Erlangen}, \postcode{91058}, \country{Germany}}}

\affil[3]{\orgdiv{Departamento de Matemáticas}, \orgname{Universidad Autónoma de Madrid}, \orgaddress{ \city{Madrid}, \postcode{28049}, \country{Spain}}}

\affil[4]{\orgdiv{Chair of Computational Mathematics}, \orgname{Fundación Deusto}, \orgaddress{ \city{Bilbao}, \postcode{48007}, \country{Basque Country}}}

\abstract{The Lotka-Volterra model reflects real ecological interactions where species compete for limited resources, potentially leading to coexistence, dominance of one species, or extinction of another. Comprehending the mechanisms governing these systems can yield critical insights for developing strategies in ecological management and biodiversity conservation.   
    
    In this work, we investigate the controllability of a Lotka-Volterra system modeling weak competition between two species. Through constrained controls acting on the boundary of the domain, we establish conditions under which the system can be steered towards various target states. More precisely, we show that the system is controllable in finite time towards a state of coexistence whenever it exists, and asymptotically controllable towards single-species states or total extinction, depending on domain size, diffusion, and competition rates. Additionally, we determine  scenarios where controllability is not possible and, in these cases, we construct barrier solutions that prevent the system from reaching specific targets. Our results offer critical insights into how competition, diffusion, and spatial domain influence species dynamics under constrained controls. Several numerical experiments complement our analysis, confirming the theoretical findings.}

\keywords{Lotka-Volterra, Boundary Control, Weak Competition, Barrier Solutions}

%%\pacs[JEL Classification]{D8, H51}

\pacs[MSC Classification]{92B05, 35Q93, 93-10}

\maketitle

\section{Introduction}
\subsection{Motivations}
In natural ecosystems, populations coexist within shared habitats, engaging in various ecological interactions such as interspecific competition, predation, parasitism, and mutualism. 
%Understanding these dynamics is crucial for developing realistic mathematical models and it is one of the main topics in mathematical ecology.
The interactions between two species in a region (such as predator-prey dynamics or competition) are described by the classical Lotka-Volterra models, which aim to predict various scenarios, such as species coexistence or the extinction of one or both species, through the analysis of the model coefficients. These coefficients are fundamentally related to the diffusion capacity and the intrinsic and inter-specific interactions between the species. The literature on this topic is extensive and diverse; for example, we cite \cite{PB, LE, LE2, MAY, MSM, tra2}.

An ever-present challenge, with a significant practical impact, is the ability to control specific phenomena, whether they are biological, chemical, economic, or social. In other words, it involves developing methods to steer natural or engineered phenomena towards a specific outcome. Broadly speaking, this is the essence of Control Theory, which aims to use mathematical analysis to identify conditions and methods that make this possible. The basic idea is to introduce a control agent into the model and then determine whether this agent can guide the model's solutions towards the desired objective over time.

For a long time, attempts have been made to control biological models, especially those involving the interaction of species in the same environment. For example, in \cite{PP}, the authors studied control in a Lotka-Volterra prey-predator system. The model simulates the interaction of a predator and a pest (the prey), and two types of controls stand out: an insecticide that kills only the predators and also the release rate of prey  raised in the laboratory. The controlled systems were considered without the effect of spatial diffusion:
$$\left\{\begin{array}{l}
\dfrac{dN_1}{dt}=(\alpha_1-\beta_1N_2)N_1\\\\
\dfrac{dN_2}{dt}=(\beta_2N_1-\alpha_2)N_2-U(t)N_2\\\\
N_1(t_0)=N_{10},\ \ N_2(t_0)=N_{20}
\end{array}\right.$$
and
$$\left\{\begin{array}{l}
\dfrac{dN_1}{dt}=(\alpha_1-\beta_1N_2)N_1+V(t)\\\\
\dfrac{dN_2}{dt}=(\beta_2N_1-\alpha_2)N_2\\\\
N_1(t_0)=\tilde{N}_{10},\ \ N_2(t_0)=\tilde{N}_{20}
\end{array}\right.$$
where \(N_1(t)\) and \(N_2(t)\) are the quantities of prey and predators, respectively. The parameters \(\alpha_i, \beta_i\) are positive constants, \(U(t)\) is the application rate of an insecticide that kills only the predators, and $V(t)$ is the release rate of the pests bred in the laboratory.

The results show that it is feasible to control a pest using an insecticide that kills only the predators and, additionally, it is feasible to manage a pest by releasing it at the appropriate time and rate. The biological rationale behind these two ideas is that these control variables can be used to prevent a subsequent collapse of the predator population, resulting in a pest resurgence.  Some practical examples of this method of biological control are considered in \cite{P1}  in the control of the red spider mite and in \cite{P2} where the release of hosts has also been used in the control of the cabbage worm in a host-parasite system. More recent research involving other species interaction models, which also served as motivation for the present work, can be found in \cite{BJ,oc1,oc2} and references therein.

Our objective here is to consider a Lotka-Volterra model that describes the weak competition between two species in a confined region. We propose control parameters that represent the population densities of the species on the boundary of this region. Our interest lies in determining whether it is possible to steer the model's solutions towards permanent coexistence scenarios or the extinction of one or both species.

An example of weak competition between two species in a confined region can be illustrated by considering two species, {\bf\it A} and {\bf\it  B}, that share overlapping but not identical niches. Species {\bf\it A} primarily consumes grass, while species {\bf\it  B} feeds mostly on shrubs, resulting in limited direct competition for resources. The confined region contains a balanced mix of both grass and shrubs, ensuring that neither species depletes the primary food source of the other. A fundamental aspect of this dynamic is what occurs on the boundary of the region. Although the weak competition in the example leads us to always expect a state of coexistence between species {\bf\it A} and {\bf\it  B}, our results show that it is possible to control the population density of species {\bf\it A} and {\bf\it  B} on the boundary of the region, in a way that guides the species over time, not only towards coexistence but also towards the extinction of one or even both species.

We can interpret that, even though they do not primarily compete for the same food source, a controlled proportion of populations {\bf\it A} and {\bf\it  B} on the boundary of the region can affect the diffusion, reproduction, and survival capabilities of one of the species, leading to its extinction over time. Furthermore, as one might naturally expect, our results show that the smaller the confinement region, the greater the effect of boundary control on the species.

The importance of such control capability is evident; just as we might desire the coexistence of species in a given environment or maintain the coexistence of two vital bacteria within an organism, we might also aim for the elimination of a pest from a crop. From a mathematical perspective, the challenges are also significant. The biological origins of the model inherently compel us to impose constraints on both the solutions and the controls, thereby making the application of many established techniques and results more difficult or even impossible. Below, we detail the problem under consideration and provide a brief overview of the relevant literature.

\subsection{Problem Setting} In this work, we consider a reaction-diffusion system in a one-dimensional domain primarily to enhance clarity in the exposition. Indeed, all results are valid in domains of higher dimensions, and Section \ref{HD} is dedicated to this case. Our problem is described below
\begin{equation}\label{SP}
\left\{\begin{array}{ll}
u_t=d_1u_{xx}+u(1-u-k_1v),& (x,t)\in (0,L)\times\mathbb{R}^+\\
v_t=d_2v_{xx}+v(a-v-k_2u),& (x,t)\in (0,L)\times\mathbb{R}^+\\
u(x,0)=u_0(x),\ \ v(x,0)=v_0(x),& x\in (0,L)
\end{array}\right.
\end{equation}
where
\begin{itemize}
\item $u$ and $v$ are the population densities of the two species competing  in $(0,L)$ and $(u(x,t),v(x,t))$ is the state to be controlled;
\item $u_0\in L^{\infty}((0,L);[0,1])$ and $v_0\in L^{\infty}((0,L);[0,a])$ are the initial conditions;
    \item $d_1,d_2>0$ are constants representing the diffusion rates;
    \item $a>0$ is a constant representing the intrinsic growth rate of $v$;
    \item $k_1,k_2>0$ are constants representing the  inter-specific competition between $u$ and $v$.
\end{itemize}

The intrinsic growth rate of \( u \), as well as the intraspecific competition rates, were set to $1$ in order to reduce the number of free variables in the model and thus facilitate its analysis. On the other hand, \eqref{SP} can be regarded as a suitably renormalized version of a  general  Lotka-Volterra model (see \cite{YY}, for instance).  

Note that $1$ and $a$ are the carrying capacities of $u$ and $v$, respectively, and therefore it is natural to constrain the solutions by these values, i.e.
\begin{equation}\label{VU}0\leq u(x,t)\leq 1\mbox{ and } 0\leq v(x,t)\leq a\mbox{ for all } (x,t)\in (0,L)\times\mathbb{R}^+.\end{equation}

Moreover, we assume
$$k_1,k_2<1$$
and this condition results in a {\it weak competition system}.

We suppose {\it boundary controls constraints} $c_u(x,t), c_v(x,t)\in L^{\infty}(\{0,L\}\times\mathbb{R}^+)$,
\begin{equation}
\left\{\begin{array}{ll}
u(x,t)=c_u(x,t)& (x,t)\in\{0,L\}\times\mathbb{R}^+\\
v(x,t)=c_v(x,t)& (x,t)\in\{0,L\}\times\mathbb{R}^+\\

\end{array}\right.
\end{equation}
satisfying \begin{equation}\label{res}
0\leq c_u\leq 1\mbox{ and }0\leq c_v\leq a.\end{equation}

Again, the constraints on the controls are natural due to the carrying capacities of $u$ and $v$.

Classical comparison results for parabolic systems, to be discussed later (Section \ref{PR}), ensure that with the boundary control constraints \eqref{res} and initial conditions $(u_0(x),v_0(x))$ satisfying $0\leq u_0\leq 1$, $0\leq v_0\leq a$, problem \eqref{SP} always has a unique solution $(u,v)$ satisfying \eqref{VU}. This follows from the monotonicity structure of \eqref{SP} and the fact that it is satisfied by both $(1,0)$ and $(0,a)$.

\begin{definition}\label{defp}
    We say that \eqref{SP} is {\it controllable towards} $(\overline{u},\overline{v})$ if  for any initial condition $(u_0(x),v_0(x))$ ($0\leq u_0\leq 1$, $0\leq v_0\leq a$),
there exist controls $c_u\in L^{\infty}(\{0,L\}\times\mathbb{R}^+; [0, 1])$, $c_v\in L^{\infty}(\{0,L\}\times\mathbb{R}^+; [0, a])$ such that
$$(u(t,x),v(t,x))\to (\overline{u},\overline{v})$$
uniformly in $[0,L]$ as $t\to\infty$.
\end{definition}

\begin{definition} We say that \eqref{SP} is {\it controllable in finite time towards} $(\overline{u},\overline{v})$ if  for any initial condition $(u_0(x),v_0(x))$ ($0\leq u_0\leq 1$, $0\leq v_0\leq a$),
there exist $T>0$ and controls $c_u\in L^{\infty}(\{0,L\}\times\mathbb{R}^+; [0, 1])$, $c_v\in L^{\infty}(\{0,L\}\times\mathbb{R}^+; [0, a])$ such that $(u(x,T),v(x,T))=(\overline{u}(x),\overline{v}(x))$.
\end{definition}

We are interested in controlling system \eqref{SP} towards the  steady-states of \eqref{SP}, that is, solutions of
\begin{equation}\label{SPSS}
\left\{\begin{array}{ll}
d_1u_{xx}+u(1-u-k_1v)=0,& x\in (0,L)\\
d_2v_{xx}+v(a-v-k_2u)=0,& x\in (0,L).\\
\end{array}\right.
\end{equation}

The targets to be considered in this work are described below.

\begin{itemize}
\item a homogeneous state of species coexistence
\begin{equation}\label{DH}(u^*,v^*)=\left(\dfrac{1-k_1a}{1-k_1k_2},\dfrac{a-k_2}{1-k_1k_2}\right),\end{equation}
which only makes sense to us when \(k_2<a<1/k_1\);
\item the extinction of the species $(0,0)$;
\item the survival of one of the species $(1,0)$ and $(0,a)$;
\item a heterogeneous state of species coexistence  for the case $d_1=d_2=d$ and $a=1$,
\begin{equation}\label{uev}
(u^{**}(x),v^{**}(x))=\left(\left(\dfrac{1-k_1}{1-k_1k_2}\right)\theta(x),\left(\dfrac{1-k_2}{1-k_1k_2}\right)\theta(x)\right)
\end{equation}
where $\theta(x)$ is a smooth function that satisfies
\begin{equation}\label{ET}
\left\{\begin{array}{l}
d\theta''(x)+\theta(x)(1- \theta(x))=0\ \  x\in (0,L) \\
\theta(0)=\theta(L)=0,\\
0<\theta(x)<1,\ \  x\in (0,L).
\end{array}\right.
\end{equation}

\end{itemize}
%Evidently, results that guarantee the existence of controls capable of steering an initial state towards a specific target, in our case, one of the steady states mentioned above, are of utmost importance. For instance, this allows us to devise strategies to eradicate a pest from a crop or to maintain the coexistence of two vital bacteria within an organism.

It is not difficult to see that, under our hypotheses, both steady states $(u^*, v^*)$ and $(u^{**}, v^{**})$ satisfy the conditions imposed in \eqref{VU}.

\begin{remark}
    Initially, we are interested in asymptotic controllability results (Definition \ref{defp}) which, for simplicity, will be referred to as controllability throughout the text. Controllability in finite time will only be possible for the target $(u^*,v^*)$, as it lies far from the limits imposed by the constraints in \eqref{VU} (Theorem \ref{T1F}). For this purpose, our result on asymptotic controllability towards $(u^*,v^*)$ is essential. Indeed, once the trajectory is sufficiently close to target $(u^*,v^*)$  (Theorem \ref{T1}), local controllability results can be employed to reach the target in finite time. This is possible because, locally, the control can oscillate above and below the target, allowing the trajectory to reach it in finite time. For this reason, controllability in finite time is not expected towards the targets $(1,0)$, $(0,a)$, $(0,0)$ and $(u^{**},v^{**})$, as in these cases, the local oscillation of controls is inhibited by the constraint \eqref{res}. A detailed discussion of this case is provided in Section \ref{FT}.

\end{remark}

Our results relate the system coefficients ($d_1,d_2,a,k_1,k_2$) and the interval length (\(L\)) to determine whether controllability towards the targets mentioned above is possible.

%Given the extensive and diverse literature on Lotka-Volterra models, we will focus our very brief state-of-the-art review on controllability issues related to parabolic systems in general.

%Control issues for coupled parabolic systems, whether linear or nonlinear, have always been a challenging topic. Due to their applicability across various fields, these problems have garnered significant attention from the mathematical community in recent decades. A comprehensive overview of this topic can be found in \cite{sur,sur1}. 
There is an extensive and diverse literature on Lotka-Volterra models, as well as on control problems in parabolic systems.
Specifically, for a Lotka-Volterra system with two species, results on null controllability were obtained in \cite{null} (refer to the references therein) using \(L^2\) controls acting within the interior of the domain. In this same context, results on optimal control for models without diffusion can be found in \cite{oc1,oc2,oc3,PP} and references therein.

In short, we can say that the main novelties of the present text are twofold: the controls act on the boundary of the domain, and they are subject to natural constraints. To the best of our knowledge, this is the first work to study the controllability in  this case for a coupled parabolic system. Such conditions were recently addressed for reaction-diffusion equations using different techniques \cite{Z1,Z2,Z3,MR}, such as phase portrait analysis and the staircase method; however, these cannot be directly applied in our case as we deal with a system rather than scalar equations.

%Such conditions were recently addressed for reaction-diffusion equations using different techniques, see \cite{Z1,Z2,Z3,MR}.

%We emphasize that  our work addresses the controllability  as defined above, which corresponds to the case of  controllability in infinite time. The same problem in finite time can also be considered. In this case, we aim to steer a given initial condition to exactly reach a specified target in a time $T < \infty$.

%In particular, we observe that our results may assist in constructing finite-time control for the target $(u^*,v^*)$. Indeed, once the trajectory is sufficiently close to target $(u^*,v^*)$  (Theorem \ref{T1}), local controllability results can be employed to reach the target in finite time. This is possible because, locally, the control can oscillate above and below the target, allowing the trajectory to reach it in finite time. For this reason, controllability in finite time is not expected towards the targets $(1,0)$, $(0,a)$, $(0,0)$ and $(u^{**},v^{**})$, as in these cases, the local oscillation of controls is inhibited by the constraint \eqref{res}. A detailed discussion of this case is provided in Section \ref{FT}.

It is important to note that, although we focus on Dirichlet controls in this work, the same results can be extended to Neumann controls. Specifically, once we identify Dirichlet controls that steer the solution toward the desired target, the resulting flux through the boundary can naturally be interpreted as a Neumann control. This duality arises in problems where controls act on the entire boundary. Consequently, the results obtained here can also be understood as strategies for controlling the boundary flux, which has practical applications in processes such as boundary regulation, filtration, or the facilitation of migration.

Our work is organized as follows. In Section \ref{MR}, we present the main results of this work, followed by all the proofs in Section \ref{PR}. In Section \ref{CS}, we introduce some different control strategies, while in Section \ref{FT}  we study the case of controllability in finite time.
 In the Section \ref{HD}, we extend the discussion to the same problem in a higher dimension. Numerical simulations are provided in Section \ref{NS}, and in Section \ref{CR}, we conclude with some final remarks and highlight several open problems for future research.

%\subsection{Some know results}

\section{Statement of the main results}\label{MR}

All the controllability and non-controllability results given below have an interesting biological interpretation. In summary, in this weak competition model, the ability to control the system towards a given target is closely linked to the existence of a homogeneous coexistence state and the size of the environment that the species inhabit. A more detailed explanation is provided after the statement of each theorem.

\begin{theorem}\label{T1}
If \begin{equation}\label{CC}k_2<a<\dfrac{1}{k_1}\end{equation}
then \eqref{SP} is controllable towards $(u^*,v^*)$ defined in \eqref{DH}.
\end{theorem}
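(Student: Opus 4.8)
The plan is to exploit the fact that, under the weak-competition hypothesis \eqref{CC}, the homogeneous steady state $(u^*,v^*)$ is the unique coexistence equilibrium of the \emph{spatially homogeneous} (kinetic) system and, moreover, it is globally asymptotically stable for that ODE system within the positive quadrant. The idea is to force the boundary values of $u$ and $v$ to equal the constants $u^*$ and $v^*$ for all times $t>0$, i.e. take $c_u\equiv u^*$ and $c_v\equiv v^*$ (both of which lie in $[0,1]$ and $[0,a]$ respectively, by the computations following \eqref{DH}, so the constraint \eqref{res} is met), and then show that the solution of the resulting Dirichlet problem converges uniformly on $[0,L]$ to $(u^*,v^*)$ as $t\to\infty$, irrespective of the admissible initial data.

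First I would set up the comparison machinery announced in Section \ref{PR}: because \eqref{SP} with these boundary data has the cooperative/competitive monotone structure already used to guarantee well-posedness and the invariance \eqref{VU}, one can sandwich the solution between two ordered families of sub- and super-solutions. Concretely, I would build a decreasing sequence of spatially homogeneous supersolutions and an increasing sequence of subsolutions by solving the kinetic ODE system with extremal constant data (the pairs $(1,0)$, $(0,a)$, and the constant $(u^*,v^*)$ itself serve as the natural building blocks, exactly as in the monotone-iteration scheme for competitive systems). Since the constant functions $(u^*,v^*)$ are themselves a stationary solution of the full PDE with the prescribed boundary values, monotone iteration from above and from below produces two solutions of \eqref{SPSS} with boundary data $(u^*,v^*)$; the key point is then to show these two extremal steady states coincide and equal $(u^*,v^*)$, which forces the squeezed trajectory to converge to $(u^*,v^*)$ uniformly.

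The uniqueness of the steady state of \eqref{SPSS} with constant boundary data $(u^*,v^*)$ is where the hypothesis \eqref{CC} and the weak-competition condition $k_1,k_2<1$ must be used in an essential way. I would argue it by a Lyapunov/energy functional adapted to Lotka-Volterra systems — for instance a spatial integral of $\int (u-u^*\ln u)+ \alpha\int (v-v^*\ln v)$ with a suitably chosen weight $\alpha>0$ depending on $k_1,k_2$ — whose time derivative along trajectories of \eqref{SP} with the prescribed boundary data is nonpositive, with the diffusion terms contributing a good (negative) Dirichlet-type term after integration by parts (the boundary terms vanish precisely because $u=u^*$, $v=v^*$ on $\{0,L\}$), and the reaction part contributing a negative definite quadratic form in $(u-u^*,v-v^*)$ exactly when $k_1 k_2<1$. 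This yields a strict Lyapunov functional, hence convergence of the $\omega$-limit set to $(u^*,v^*)$; combined with parabolic smoothing (interior and up-to-the-boundary estimates, e.g. Schauder or $L^p$ bounds, using that the data are bounded in $[0,1]\times[0,a]$), the convergence upgrades from $L^2$ or $L^1$ to uniform convergence on $[0,L]$, which is what Definition \ref{defp} requires.

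The main obstacle I anticipate is precisely the construction of the Lyapunov functional and the verification that its dissipation rate controls the full deviation $(u-u^*,v-v^*)$ in the presence of diffusion and the inhomogeneous (nonzero constant) boundary data — one has to be careful that the logarithmic Lyapunov density is only well-defined where $u,v>0$, so a preliminary step is needed to show the solution becomes strictly positive in the interior instantaneously (strong maximum principle) and to handle the possibility that $u_0$ or $v_0$ vanishes on parts of $(0,L)$. An alternative, if the Lyapunov route proves delicate with Dirichlet data, is to run the monotone-iteration argument more carefully and invoke a known uniqueness result for positive solutions of weakly-coupled competitive elliptic systems with these constant boundary values; but either way, reducing everything to "the only steady state compatible with boundary data $(u^*,v^*)$ is the constant $(u^*,v^*)$, and it attracts all admissible trajectories" is the crux, and the weak-competition inequality $k_1k_2<1$ together with \eqref{CC} is what makes it true.
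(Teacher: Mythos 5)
Your proposal is correct in outline, but it takes a genuinely different route from the paper. The paper's proof is essentially a two-line reduction: it chooses as boundary control the trace of the solution $(\overline{u},\overline{v})$ of the same system under Neumann conditions \eqref{NC}, so that the controlled Dirichlet trajectory coincides with the Neumann trajectory, and then invokes the classical global convergence result for the weak-competition Neumann problem (Theorem \ref{TN}\,$(i)$). You instead use the static control $(c_u,c_v)\equiv(u^*,v^*)$ and must prove that $(u^*,v^*)$ globally attracts all admissible data for the Dirichlet problem with these constant boundary values; your weighted logarithmic Lyapunov functional is the standard way to do this (the boundary terms indeed vanish because $u=u^*$, $v=v^*$ on $\{0,L\}$, and with weight $\alpha=k_1/k_2$ the reaction part is a negative definite quadratic form in $(u-u^*,v-v^*)$ precisely when $k_1k_2<1$), and the resulting global-stability statement is essentially the classical theorem of Leung \cite{LE} (see also \cite{COS,pao}), so your key lemma is available in literature the paper already cites. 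What each approach buys: the paper's argument is immediate once Theorem \ref{TN} is granted, at the price of a time-varying control given by the trace of an auxiliary solution; yours yields a static, explicitly admissible control (since $0<u^*<1$ and $0<v^*<a$), at the price of the Dirichlet global-stability machinery --- in particular the positivity issue you flag (instantaneous interior positivity via the strong maximum principle so that the entropy density is finite for $t>0$), the upgrade from the $L^2$-type dissipation estimate to the uniform convergence required by Definition \ref{defp} via parabolic smoothing, and, in the monotone-iteration variant, the fact that for this competitive (quasimonotone nonincreasing) system the extremal pairs are ordered in the competitive sense (one component from above, the other from below), as in Theorems \ref{cos} and \ref{cpao}, not componentwise. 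None of these points is a gap in principle, but they are where the real work lies in your version, whereas the paper sidesteps them entirely by its choice of control.
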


Regarding the results above, several observations can be made.
Condition \eqref{CC} is necessary for the existence of a coexistence state \((u^*, v^*)\) in the weak competition regime we are assuming; in other words, its assumption is crucial in the study of controllability towards the target \((u^*, v^*)\). What the theorem tells us is that \eqref{CC} is also sufficient for controllability.

Controllability towards the target \((u^*, v^*)\),  independent of the domain size \(L\) or other parameter values, highlights the decisive role of species competition in relation to diffusion capacities and the environment's size. Even if one of the species has a higher diffusion capacity in a large competition region, given that this competition is weak, it is possible to control the boundary of this environment in order to achieve a coexistence state of the species.

The proof of Theorem \ref{T1} consists of presenting a control strategy that asymptotically drives a given initial state \((u_0,v_0)\) to the target \((u^*, v^*)\). As we will see, the strategy presented is derived from the dynamics of the same problem but with Neumann boundary conditions. A control strategy based on traveling waves is introduced when \(L\) and the reproduction rate of $v$ are sufficiently small (see Subsection  \ref{TW}).

\begin{theorem}\label{T2}

\begin{enumerate}[$(i)$]
\item If $L\leq \displaystyle\sqrt{\frac{d_2}{a}}\pi$ or
 $k_2>a$ then \eqref{SP} is controllable towards $(1,0)$.
 \item If $L\leq \sqrt{d_1}\pi$ or
 $k_1>\dfrac{1}{a}$ then \eqref{SP} is controllable towards $(0,a)$.
 \end{enumerate}
\end{theorem}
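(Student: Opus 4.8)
The plan is, in each case, to steer the system with \emph{time-independent} admissible controls and to show that the resulting trajectory converges uniformly to the prescribed target. For part $(i)$ I would take $c_u\equiv 1$ and $c_v\equiv 0$ on $\{0,L\}\times\mathbb R^+$, which respect \eqref{res}; for part $(ii)$, $c_u\equiv 0$ and $c_v\equiv a$. The two parts are mirror images of one another under the interchange of the two species (the threshold $k_2>a$ of $(i)$ corresponds to $k_1>1/a$ in $(ii)$, since $1-k_1a<0\iff k_1>1/a$), so I would treat $(i)$ in detail and only point out the changes for $(ii)$. With these controls the solution of \eqref{SP} remains in the box $0\le u\le 1$, $0\le v\le a$ by the comparison principles of Section \ref{PR} (the constants $0,1$ and $0,a$ are sub/supersolutions of each scalar equation once the other component is frozen in its own range). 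Hence it is enough to prove that $v(\cdot,t)\to 0$ uniformly on $[0,L]$: granted this, fix $\varepsilon>0$; for $t$ large, $v\le\varepsilon$, so the first equation of \eqref{SP} gives $u_t\ge d_1u_{xx}+u(1-k_1\varepsilon-u)$ with $u\equiv 1$ on the boundary, and scalar comparison with the diffusive logistic Dirichlet problem of growth rate $1-k_1\varepsilon$ (whose unique positive, globally attracting steady state is $\ge 1-k_1\varepsilon$, the constant $1-k_1\varepsilon$ being a subsolution with admissible boundary value) yields $\liminf_{t\to\infty}\min_{[0,L]}u\ge 1-k_1\varepsilon$; letting $\varepsilon\downarrow 0$ and using $u\le 1$ gives $u\to 1$. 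So everything reduces to showing $v\to 0$.

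\emph{The subcritical-domain case} $L\le\sqrt{d_2/a}\,\pi$. Since $u\ge 0$, the second equation of \eqref{SP} yields $v_t\le d_2v_{xx}+v(a-v)$ with $v\equiv 0$ on $\{0,L\}$, so $v$ is a subsolution of the diffusive logistic Dirichlet problem on $(0,L)$ with growth rate $a$. The hypothesis reads precisely $a\le d_2(\pi/L)^2$, i.e. the growth rate does not exceed the principal Dirichlet eigenvalue; by the classical theory of that equation it then possesses no positive steady state and all nonnegative solutions decay to $0$ uniformly, whence $v\to 0$. For part $(ii)$ the same comparison applies to $u$ (whose intrinsic growth rate is $1$, so the critical length is $\sqrt{d_1}\,\pi$), giving $u\to 0$ and then $v\to a$ by the mirror image of the reduction step above.

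\emph{The competition case} $k_2>a$. Here $k_2<1$ forces $a<k_2<1$, hence $1-k_1a>0$, and this is exactly what allows the argument to start. I would bootstrap between the two equations, alternately improving a lower bound for $u$ and an upper bound for $v$. From $v\le a$ the first equation gives $u_t\ge d_1u_{xx}+u(1-k_1a-u)$, so comparison with the logistic Dirichlet problem of growth rate $1-k_1a>0$ and boundary value $1$ gives $\liminf_t u\ge 1-k_1a=:m_1$; feeding $u\ge m_1-\varepsilon$ into the second equation makes $v$ a subsolution of a logistic Dirichlet problem of growth rate $a-k_2m_1+k_2\varepsilon$, which, if $\le 0$, already forces $v\to 0$ (then $v_t\le d_2v_{xx}$), and otherwise gives $\limsup_t v\le a-k_2m_1+k_2\varepsilon=:\gamma_1$. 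Iterating one obtains constants (up to arbitrarily small errors) governed by $m_{n+1}=1-k_1\gamma_n$, $\gamma_n=a-k_2m_n$, i.e. the affine recursion $m_{n+1}=1-k_1a+k_1k_2m_n$. Since $0<k_1k_2<1$ this is a contraction with fixed points $m_\infty=(1-k_1a)/(1-k_1k_2)=u^*$ and $\gamma_\infty=(a-k_2)/(1-k_1k_2)=v^*$; but $k_2>a$ makes $v^*<0$, so after finitely many rounds the growth rate $\gamma_n$ for $v$ drops to $\le 0$ and $v\to 0$ uniformly. The analogous bootstrap handles $(ii)$ when $k_1>1/a$: here $k_1<1$ forces $a>1/k_1>1$, hence $a-k_2>0$ (which starts the scheme), and alternately bounding $v$ from below and $u$ from above pushes the lower bound for $v$ above $1/k_1$ (the relevant recursion has fixed point $v^*=(a-k_2)/(1-k_1k_2)>1/k_1$, again because $k_1a>1$), so that $u$ acquires a strictly negative growth rate, $u\to 0$ on the whole (arbitrary) domain, and then $v\to a$.

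The main obstacle is precisely the competition case $k_2>a$ (and its mirror $k_1>1/a$): one application of comparison is not sufficient, because $v\le a$ only yields $u\ge 1-k_1a$, which may fall short of the threshold $a/k_2$ needed to make the reaction term of $v$ negative, so $v$ need not decay after a single step. The bootstrap is therefore essential, and it closes thanks to the two standing hypotheses working together: weak competition $k_1k_2<1$ turns the recursion into a contraction, and $k_2>a$ --- which, via $k_2<1$, also secures $1-k_1a>0$ so that the scheme can even begin --- drives its fixed point $v^*$ strictly below $0$, forcing extinction of $v$ in finitely many rounds. One should also invoke the standard dichotomy for the diffusive logistic equation (a positive steady state exists iff the growth rate exceeds the principal Dirichlet eigenvalue) and verify that the finitely many errors $\varepsilon$ introduced along the bootstrap are innocuous, which holds because only a bounded number of rounds is needed.
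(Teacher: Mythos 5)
Your proposal is correct, but it follows a genuinely different route from the paper in both regimes. For the small-domain case the paper also uses the static controls $(1,0)$, but instead of decoupling the system it shows, via the multiplier $\sin(\pi x/L)$ and an energy identity, that $(1,0)$ is the \emph{unique} steady state compatible with these boundary data, and then invokes the convergence theorem for competitive systems of \cite{JS} to conclude that every bounded trajectory converges to it; your argument replaces this by scalar comparison with diffusive logistic Dirichlet problems and the principal-eigenvalue dichotomy, first forcing $v\to 0$ and then lifting $u\to 1$. For the regime $k_2>a$ (resp.\ $k_1>1/a$) the paper does not keep static controls at all: it reuses the strategy of Theorem \ref{T1}, taking as Dirichlet controls the boundary trace of the solution of the Neumann problem, so that convergence to $(1,0)$ is read off directly from Theorem \ref{TN} $(ii)$; you instead retain $(c_u,c_v)\equiv(1,0)$ and re-derive global convergence by a contracting bootstrap whose affine recursion has fixed point $(u^*,v^*)$ with $v^*<0$ when $k_2>a$. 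Both routes are sound: the paper's is shorter because it delegates the asymptotics to cited results, while yours is more self-contained and yields constant controls even without any smallness of $L$ --- at the price of invoking standard facts not proved in the paper (uniqueness and global attractivity of the positive steady state of the diffusive logistic equation with positive Dirichlet boundary data, and decay to zero when the growth rate does not exceed $d\pi^2/L^2$), and of bookkeeping the finitely many $\varepsilon$-losses, which you do address. The coupled comparison steps you need are exactly what Theorem \ref{cpao} supplies, so your bootstrap can be carried out rigorously within the paper's framework.
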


This theorem illustrates the complex balance between diffusion, competition, and domain size in determining species dominance. Specifically, the conditions indicate how the length of \(L\), the diffusion rate, and the competition intensity influence the possibility to steer the system towards  states \((1,0)\) or \((0,a)\). In both cases ($(i)$ and $(ii)$), for fixed diffusion, reproduction, and competition parameters, it is always possible to control the system towards these single-species state if \(L\) is sufficiently small. In these cases, static controls are sufficient.

On the other hand, if the competition and reproduction rates are related such that \(k_2 > a\) or \(k_1 > 1/a\) (and consequently we do not have a coexistence state), then it is possible to control the system towards \((1,0)\) or \((0,a)\), respectively, regardless of the size of \(L\).

At this point, a natural question is to inquire about the controllability towards the states $(1,0)$ or $(0,a)$ when none of these conditions are satisfied. The answer is provided in the following theorem.

\begin{theorem}\label{T4}
If $k_2<a<1/k_1$ and
\begin{equation}\label{LM}L>\max\left\{\sqrt{\dfrac{d_1}{1-ak_1}}\pi,\sqrt{\dfrac{d_2}{a-k_2}}\pi\right\}\end{equation}
then \eqref{SP} is not controllable towards either $(1,0)$ or $(0,a)$. 
\end{theorem}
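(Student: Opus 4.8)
The plan is to block the two single--species targets by exhibiting \emph{stationary barrier solutions}: a positive steady state that bounds $v$ from below (preventing $(1,0)$) and one that bounds $u$ from below (preventing $(0,a)$), for a suitable choice of initial datum and \emph{uniformly over all admissible controls}. The starting point is that, by the comparison results of Section \ref{PR}, the a priori bounds \eqref{VU} hold for \emph{every} pair $(c_u,c_v)$ satisfying \eqref{res}; in particular $0\le u\le 1$ and $0\le v\le a$ throughout $(0,L)\times\mathbb{R}^+$. Substituting $u\le 1$ into the $v$--equation of \eqref{SP} gives
\[
v_t-d_2v_{xx}-v(a-k_2-v)=k_2\,v\,(1-u)\ge 0,
\]
so $v$ is a supersolution of the scalar Dirichlet logistic equation with reaction $s\mapsto s(a-k_2-s)$; symmetrically, using $v\le a$, the component $u$ is a supersolution of the scalar logistic equation with reaction $s\mapsto s(1-k_1a-s)$. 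Both reaction rates $a-k_2$ and $1-k_1a$ are positive, by the hypothesis $k_2<a<1/k_1$.

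For the target $(1,0)$, I would invoke the classical fact (the same structure as the function $\theta$ in \eqref{ET}) that the problem $d_2\phi''+\phi(a-k_2-\phi)=0$ on $(0,L)$ with $\phi(0)=\phi(L)=0$ admits a positive solution $\Phi$, with $0<\Phi\le a-k_2$ on $(0,L)$, precisely when the principal Dirichlet eigenvalue $d_2\pi^2/L^2$ lies strictly below $a-k_2$, i.e. when $L>\sqrt{d_2/(a-k_2)}\,\pi$ --- exactly the second bound in \eqref{LM}. Note $\Phi\le a-k_2<a$, so $\Phi$ is an admissible initial datum. Fix $(u_0,v_0)$ with $v_0=\Phi$ and $u_0$ arbitrary in $L^\infty((0,L);[0,1])$. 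Then $\Phi$, viewed as a time--independent solution, is $\le v$ on the parabolic boundary ($\Phi=0\le c_v$ on $\{0,L\}\times\mathbb{R}^+$ and $\Phi=v_0$ at $t=0$), so the parabolic comparison principle yields $v(x,t)\ge\Phi(x)$ for all $(x,t)\in(0,L)\times\mathbb{R}^+$ and every admissible $(c_u,c_v)$. Hence $\sup_{x\in[0,L]}v(x,t)\ge\|\Phi\|_\infty>0$ for all $t$, which is incompatible with uniform convergence of $(u,v)$ to $(1,0)$; therefore \eqref{SP} is not controllable towards $(1,0)$.

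The argument for $(0,a)$ is entirely symmetric: under $L>\sqrt{d_1/(1-ak_1)}\,\pi$ (the first bound in \eqref{LM}) the Dirichlet logistic problem with reaction $s(1-k_1a-s)$ has a positive solution $\Psi$ with $0<\Psi\le 1-k_1a<1$. Taking $u_0=\Psi$ and $v_0$ arbitrary in $L^\infty((0,L);[0,a])$, the same comparison (now using $\Psi=0\le c_u$ on the lateral boundary and $u$ a supersolution of the $\Psi$--equation) gives $u(x,t)\ge\Psi(x)$ for all time and all admissible controls, so $u$ cannot converge uniformly to $0$ and $(0,a)$ is unreachable from this initial state.

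I expect little genuine difficulty here once \eqref{VU} is available: the substantive point is recognizing that \eqref{LM} is exactly the threshold at which the two auxiliary Dirichlet logistic problems acquire positive steady states, i.e. at which the barriers become nontrivial. The only points needing care are (i) checking that the orderings on $\{0,L\}\times\mathbb{R}^+$ and at $t=0$ required by the scalar comparison principle hold for \emph{every} admissible control --- which they do, since $c_u,c_v\ge 0$ while the barriers vanish on the boundary --- and (ii) citing the existence and positivity of the logistic Dirichlet steady state at the stated eigenvalue threshold, which is standard and mirrors \eqref{ET}.
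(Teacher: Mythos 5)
Your argument is correct, but it follows a different route from the paper's own proof of Theorem \ref{T4}. You decouple the system: using the a priori bounds \eqref{VU} (valid for every admissible control by \eqref{res}), you bound $u\le 1$ (resp.\ $v\le a$) in the $v$-equation (resp.\ $u$-equation), so each component becomes a supersolution of a scalar Dirichlet logistic equation, and you then invoke the positive logistic steady state (Lemma \ref{lemp}) as a time-independent subsolution together with the scalar parabolic comparison principle. This is exactly the mechanism the paper uses for Theorem \ref{T3}$(ii)$ and which it mentions in the remark following Theorem \ref{T4} as a relaxation; the paper's actual proof of Theorem \ref{T4} instead builds a \emph{coupled} stationary barrier $(u^s,v^s)$ solving \eqref{SPN} with boundary data $(1,0)$, obtained from the mixed sub/supersolution pair $(\tilde u,\undertilde v)=(1,\delta\phi)$, $(\undertilde u,\tilde v)=(\eta\phi,a)$ via Theorem \ref{cos}, and then applies the coupled comparison Theorem \ref{cpao} to get the two-sided information $u\le u^s$, $v\ge v^s$ for all controls. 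Your scalar route is lighter and in fact proves a slightly sharper statement per target (each single inequality in \eqref{LM} suffices: $L>\sqrt{d_2/(a-k_2)}\pi$ alone blocks $(1,0)$, and $L>\sqrt{d_1/(1-ak_1)}\pi$ alone blocks $(0,a)$), whereas the paper's coupled barrier yields simultaneous control of both components over a whole cone of initial data ($u_0\le u^s$, $v_0\ge v^s$), which is the structure later exploited in the discussion of barrier solutions as a control strategy. Two minor points you handled correctly and should keep explicit: admissibility of the barrier as initial datum ($\Phi\le a-k_2<a$, $\Psi\le 1-k_1a<1$, by the constant supersolution argument), and the parabolic-boundary ordering ($\Phi=0\le c_v$, $\Psi=0\le c_u$) holding for \emph{every} admissible control.
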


Note that \eqref{LM} contradicts all conditions present in Theorem \ref{T2}. That is, in the above theorem, besides having $k_2<a<1/k_1$, we are assuming $L$ to be sufficiently large. Non-controllability is demonstrated through the construction of barrier solutions that prevent certain initial states from approaching  
 $(1,0)$ or $(0,a)$.

The above result can be interpreted as follows: if $k_2<a<1/k_1$ and the environment is sufficiently large (the threshold is explicitly given in \eqref{LM}), for certain initial states of the species, it is impossible to intervene in the population quantities on the boundary of the environment in such a way that one of them goes extinct over time.

However, for a sufficiently small environment, controllability towards the extinction of both species is always possible.

\begin{theorem}\label{T3}

  \begin{enumerate}[$(i)$]
  
  \item If  \begin{equation}\label{LT3} L\leq \min\left\{\sqrt{d_1}\pi,\sqrt{\frac{d_2}{a}}\pi\right\}\end{equation}
  then the system \eqref{SP} is controllable towards $(0,0)$;
  \item if 
  %\begin{equation}\label{LM}L>\min\left\{\sqrt{\dfrac{d_1}{1-ak_1}}\pi,\sqrt{\dfrac{d_2}{a-k_2}}\pi\right\}\end{equation}

  $$a<\dfrac{1}{k_1}\mbox{ and } L>\sqrt{\dfrac{d_1}{1-ak_1}}\pi$$
  or
  $$a>k_2\mbox{ and }  L>\sqrt{\dfrac{d_2}{a-k_2}}\pi,$$
  then the system \eqref{SP} is not controllable towards $(0,0)$.
\end{enumerate}
\end{theorem}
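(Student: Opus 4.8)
\emph{Part (i): controllability towards $(0,0)$ when $L$ is small.} The plan is to use the trivial static controls $c_u\equiv c_v\equiv0$ (admissible since $0\in[0,1]$ and $0\in[0,a]$) and to show that the corresponding trajectory decays to $(0,0)$ by scalar comparison. Along any admissible trajectory one has $0\le u\le1$, $0\le v\le a$ by \eqref{VU}; since $k_1,v\ge0$, the reaction term satisfies $u(1-u-k_1v)\le u(1-u)$, so $u$ is a subsolution of the scalar Dirichlet logistic problem $w_t=d_1w_{xx}+w(1-w)$ on $(0,L)$ with $w=0$ on $\{0,L\}\times\mathbb{R}^+$ and $w(\cdot,0)=u_0$. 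By the comparison principle (Section \ref{PR}), $0\le u\le w$. Now invoke the classical dichotomy for the Dirichlet logistic equation: the principal eigenvalue of $-d_1\partial_{xx}$ on $(0,L)$ is $d_1\pi^2/L^2$, and whenever $d_1\pi^2/L^2\ge1$, i.e. $L\le\sqrt{d_1}\pi$, the zero state is globally asymptotically stable, so $w(\cdot,t)\to0$ uniformly and hence $u(\cdot,t)\to0$ uniformly. Symmetrically, using $k_2,u\ge0$ so that $v(a-v-k_2u)\le v(a-v)$, one compares $v$ with the logistic problem $\tilde w_t=d_2\tilde w_{xx}+\tilde w(a-\tilde w)$, whose zero state is globally stable exactly when $d_2\pi^2/L^2\ge a$, i.e. $L\le\sqrt{d_2/a}\pi$. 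Under \eqref{LT3} both thresholds hold, so $(u,v)\to(0,0)$ uniformly for every admissible $(u_0,v_0)$, which is the claim.

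\emph{Part (ii): obstruction to controllability towards $(0,0)$ when $L$ is large.} Here the plan is to build, for one of the components, a stationary \emph{barrier from below} that no admissible control can break. Assume first $a<1/k_1$ and $L>\sqrt{d_1/(1-ak_1)}\pi$. Using only the universal bound $v\le a$ from \eqref{VU}, any solution of \eqref{SP} satisfies
\[
u_t-d_1u_{xx}-u\big((1-ak_1)-u\big)=k_1u(a-v)\ge0,
\]
so $u$ is a supersolution of the scalar Dirichlet logistic problem $w_t=d_1w_{xx}+w\big((1-ak_1)-w\big)$ on $(0,L)$. The hypothesis on $L$ is precisely $d_1\pi^2/L^2<1-ak_1$, so this logistic problem has a unique positive steady state $\theta^*$ with $0<\theta^*(x)<1-ak_1<1$ on $(0,L)$ and $\theta^*(0)=\theta^*(L)=0$; in particular $\theta^*$ is an admissible initial datum for $u$. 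Take $u_0\ge\theta^*$ (e.g. $u_0=\theta^*$) and any admissible $v_0$. By \eqref{res}, $u=c_u\ge0=\theta^*$ on $\{0,L\}\times\mathbb{R}^+$, while $u_0\ge\theta^*$ on $(0,L)$; since $\theta^*$, read as time-independent, is a (stationary) subsolution of the logistic problem and $u$ a supersolution, comparison gives $u(x,t)\ge\theta^*(x)$ for all $(x,t)\in(0,L)\times\mathbb{R}^+$. Hence $u(\cdot,t)\not\to0$ for \emph{every} choice of controls, so \eqref{SP} is not controllable towards $(0,0)$ from this initial state. The second case, $a>k_2$ and $L>\sqrt{d_2/(a-k_2)}\pi$, is symmetric: from $u\le1$ one gets
\[
v_t-d_2v_{xx}-v\big((a-k_2)-v\big)=k_2v(1-u)\ge0,
\]
so $v$ is a supersolution of the Dirichlet logistic problem with diffusion $d_2$ and growth rate $a-k_2$; its positive steady state $\tilde\theta$ (with $0<\tilde\theta<a-k_2<a$) exists exactly because $d_2\pi^2/L^2<a-k_2$, and choosing $v_0\ge\tilde\theta$ forces $v(\cdot,t)\ge\tilde\theta>0$, again blocking convergence to $(0,0)$.

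\emph{Main obstacle.} The coupling costs nothing: the signs $k_1,k_2,u,v\ge0$ together with the a priori bounds \eqref{VU} let one decouple the estimates into scalar comparisons, and the barriers $\theta^*,\tilde\theta$ are manifestly independent of the controls. The real input is the sharp dichotomy for the Dirichlet logistic equation — "$d_i\pi^2/L^2\ge$ growth rate $\iff$ $0$ is globally attracting; strict reverse inequality $\iff$ there is a globally attracting positive steady state" — which is standard but must be invoked carefully at the borderline lengths $L=\sqrt{d_1}\pi$ and $L=\sqrt{d_2/a}\pi$ appearing in part (i), where $0$ is still globally stable (with merely algebraic, not exponential, decay). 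The only other point needing attention in the write-up is verifying that $\theta^*\in[0,1]$ and $\tilde\theta\in[0,a]$, so that the comparison functions are themselves admissible states; this is immediate from the bounds $\theta^*<1-ak_1$ and $\tilde\theta<a-k_2$ noted above.
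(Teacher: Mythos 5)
Your proposal is correct, and half of it coincides with the paper while the other half takes a genuinely different (and arguably more self-contained) route. For part (ii) your argument is essentially identical to the paper's: the barrier $\theta^*$ you construct is exactly the paper's $\eta_1$, the unique positive solution of $d_1\eta''+\eta(1-\eta-k_1a)=0$ with zero Dirichlet data (existence via the logistic lemma, i.e. Lemma \ref{lemp}, under $d_1\pi^2/L^2<1-ak_1$), and the paper likewise uses $v\le a$ to view $u$ as a supersolution of the scalar problem, takes $u_0\ge\eta_1$, and concludes $u\ge\eta_1>0$ for every admissible control; your verification that $\theta^*<1-ak_1\le1$ (so the barrier is an admissible datum) mirrors the paper's remark that the constant $1-k_1a$ is a supersolution, and the symmetric case for $v$ with $\tilde\theta$ is the paper's $\eta_2$. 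For part (i), however, the paper proceeds differently: with the static controls $(0,0)$ it repeats the eigenfunction-multiplication argument of Theorem \ref{T2} to show that $(0,0)$ is the \emph{unique} steady state under zero Dirichlet conditions when \eqref{LT3} holds, and then invokes the convergence-to-equilibrium theorem for competitive systems (the cited result of Smillie) to conclude every bounded trajectory converges to it. You instead decouple the system by the sign structure ($u(1-u-k_1v)\le u(1-u)$, $v(a-v-k_2u)\le v(a-v)$) and dominate each component by a scalar Dirichlet logistic equation whose zero state is globally attracting precisely when $d_1\pi^2/L^2\ge1$, resp. $d_2\pi^2/L^2\ge a$. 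Your route avoids appealing to the convergence theorem for competitive systems at the price of invoking the sharp logistic dichotomy, including the borderline equality case, which you correctly flag and which does hold (no positive steady state exists at equality, and the monotone supersolution argument still drives solutions to zero, only with slower decay). Both arguments are valid; the paper's generalizes verbatim to the nonzero-boundary targets of Theorem \ref{T2}, while yours is more elementary for this particular target.
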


In a regime of weak competition, with fixed diffusion and reproduction coefficients, controllability towards species extinction is exclusively determined by the length of \(L\). If \(L\) satisfies \eqref{LT3}, then any initial state is driven to extinction with static controls equal to zero on the boundary.

Again, if $k_2<a<1/k_1$ and $L$ is sufficiently large, it is possible to construct a barrier solution that prevents certain initial states from approaching extinction.

Finally, assuming equal diffusion coefficients (\(d_1 = d_2 = d\)) and the same reproduction rates (\(a = 1\)), it is possible to steer the system to a heterogeneous equilibrium state of coexistence, provided that the length of \(L\) is sufficiently large.

\begin{theorem}\label{T5}

  If $a=1$, $d_1=d_2=d>0$ and $ L> \sqrt{d}\pi$,
  then  \eqref{SP} is  controllable towards  an specific heterogeneous coexistence state $(u^{**},v^{**})$ defined in \eqref{uev}.

\end{theorem}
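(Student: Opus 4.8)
The plan is to exploit the special algebraic structure that appears when $a=1$ and $d_1=d_2=d$: in this case the system \eqref{SP} admits the one-parameter family of ``proportional'' states $(u,v)=(\alpha\,w,\beta\,w)$, and the target $(u^{**},v^{**})$ is precisely the member of this family obtained by taking $w=\theta$, with $\alpha=(1-k_1)/(1-k_1k_2)$ and $\beta=(1-k_2)/(1-k_1k_2)$. Note $\alpha+k_1\beta=1$ and $\beta+k_2\alpha=1$, so that if $(u,v)=(\alpha w,\beta w)$ then both equations collapse to the \emph{single} scalar logistic equation $w_t=d\,w_{xx}+w(1-w)$. In particular $\theta$ from \eqref{ET} gives a steady state of that scalar equation, which exists precisely because $L>\sqrt{d}\,\pi$ (the principal Dirichlet eigenvalue of $-d\,\partial_{xx}$ on $(0,L)$ drops below $1$). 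The strategy is therefore to reduce the control of the coupled system to the control of the scalar logistic equation along the invariant manifold $\{v=(\beta/\alpha)u\}$, and then invoke the barrier / comparison machinery already developed in Section \ref{PR}.

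First I would set up the comparison framework: I want to sandwich the trajectory between two ordered solutions of the scalar logistic equation, both of which converge uniformly to $\theta$. Concretely, take $\underline{w}_0$ and $\overline{w}_0$ with $0\le\underline{w}_0\le \min\{u_0/\alpha,\,v_0/\beta\}$ and $\max\{u_0/\alpha,\,v_0/\beta\}\le\overline{w}_0\le 1/\alpha$ (or $1$; the precise constant is chosen so that $\alpha\overline{w}_0\le1$ and $\beta\overline{w}_0\le 1=a$), and let $\underline{w},\overline{w}$ solve $w_t=dw_{xx}+w(1-w)$ with Dirichlet data on $\{0,L\}$ chosen to be a time-dependent ramp from the boundary values of $\underline{w}_0,\overline{w}_0$ down/up to $\theta(0)=\theta(L)=0$. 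By the classical theory of the scalar logistic equation on an interval longer than the critical length, both $\underline{w}$ and $\overline{w}$ converge uniformly to $\theta$ as $t\to\infty$ (this is exactly the scalar phenomenon exploited in \cite{Z1,Z2,MR}, and is also what underlies the construction of $\theta$ here). The pairs $(\alpha\underline{w},\beta\underline{w})$ and $(\alpha\overline{w},\beta\overline{w})$ are then sub- and super-solutions of the full competitive system \eqref{SP}: one checks that because $\alpha+k_1\beta=\beta+k_2\alpha=1$ and the competition coefficients are positive, replacing $w$ by a sub-solution only decreases each reaction term while the cross terms have the right sign, and dually for the super-solution. (This is where the weak-competition / proportionality identities do the real work.)

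Next I would define the control: simply take $c_u:=\alpha\,\gamma$ and $c_v:=\beta\,\gamma$ on $\{0,L\}\times\mathbb{R}^+$, where $\gamma$ is the common boundary ramp interpolating between the relevant initial boundary values and $\theta=0$; by construction $0\le c_u\le\alpha\cdot(1/\alpha)=1$ and $0\le c_v\le\beta\cdot(1/\alpha)\le 1=a$, so \eqref{res} holds, and in fact the solution with these controls stays exactly on the invariant ray $v=(\beta/\alpha)u$ and equals $(\alpha\gamma_{\mathrm{sol}},\beta\gamma_{\mathrm{sol}})$ where $\gamma_{\mathrm{sol}}$ solves the scalar logistic problem with that boundary ramp. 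One then squeezes: $\alpha\underline{w}\le u\le\alpha\overline{w}$ and $\beta\underline{w}\le v\le\beta\overline{w}$ for all $t$ by the comparison principle of Section \ref{PR}, and since the two envelopes both converge uniformly to $\alpha\theta$ (resp.\ $\beta\theta$), so does $(u,v)$, which is exactly convergence to $(u^{**},v^{**})$.

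The main obstacle I anticipate is \emph{not} the algebra of the proportional states but the verification that the scalar logistic equation with a \emph{moving} Dirichlet boundary condition (ramping to zero) still converges to $\theta$ uniformly and monotonically from the chosen initial data — i.e.\ that one can genuinely choose $\underline{w}_0,\overline{w}_0$ and the ramp so that the ordered pair of scalar solutions both funnel to $\theta$. This is a statement about the global attractor of the scalar problem; for a quick route one takes $\gamma$ to reach $0$ at the boundary after a finite time $T_0$ and then invokes that, for $t\ge T_0$, the scalar logistic equation on $(0,L)$ with homogeneous Dirichlet conditions and $L>\sqrt{d}\,\pi$ has $\theta$ as a globally asymptotically stable equilibrium among nonnegative non-trivial data (and $0$ is the only other nonnegative equilibrium, unstable), combined with a sub-solution bounded below away from $0$ to rule out convergence to $0$. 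Handling the initial transient $0\le t\le T_0$ — keeping everything inside $[0,1]$ and ordered while the boundary data move — is routine via the comparison principle but must be done carefully to respect the constraints \eqref{res}. Once this scalar input is in hand, the reduction to the system is immediate from the identities $\alpha+k_1\beta=\beta+k_2\alpha=1$.
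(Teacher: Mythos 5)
Your algebraic starting point (the proportionality identities $\alpha+k_1\beta=\beta+k_2\alpha=1$, which make $(\alpha w,\beta w)$ collapse the system to the scalar logistic equation) is the same one the paper uses to verify that $(u^{**},v^{**})$ in \eqref{uev} is a steady state. But the heart of your argument — the squeeze — has a genuine gap, in two places. First, the claim that with $c_u=\alpha\gamma$, $c_v=\beta\gamma$ ``the solution stays exactly on the invariant ray $v=(\beta/\alpha)u$'' is only true if the \emph{initial datum} already lies on that ray; the theorem requires controllability from every admissible $(u_0,v_0)$ (including, say, $v_0$ far from $(\beta/\alpha)u_0$), so the reduction to a single scalar problem is not available. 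Second, and more seriously, the proportional pairs $(\alpha\underline{w},\beta\underline{w})$, $(\alpha\overline{w},\beta\overline{w})$ are \emph{not} coupled sub/super-solutions of the competitive system in the sense required by Theorem \ref{cpao}. For this mixed quasi-monotone system the super-solution for $u$ must be tested against the \emph{lower} envelope of $v$: taking $\tilde{u}=\alpha\overline{w}$ with $\overline{w}_t=d\overline{w}_{xx}+\overline{w}(1-\overline{w})$, the required inequality $\alpha\overline{w}(1-\overline{w})\geq \alpha\overline{w}\bigl(1-\alpha\overline{w}-k_1\beta\underline{w}\bigr)$ reduces, using $\alpha+k_1\beta=1$, to $\underline{w}\geq\overline{w}$ — exactly the opposite of the ordering you impose (the same reversal occurs for the $v$-component). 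The identities that make the ansatz work \emph{on} the ray make the cross terms work against you \emph{off} the ray. If instead you use genuine scalar bounds (e.g.\ $u_t\leq du_{xx}+u(1-u)$), the upper envelope converges to $\theta$, not to $\alpha\theta=u^{**}$, so scalar comparison alone cannot pin the limit at the target.

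What is missing is precisely what the paper supplies: with the static controls $(c_u,c_v)\equiv(0,0)$, one proves that $(u^{**},v^{**})$ is the \emph{unique} positive coexistence steady state of \eqref{SP} with zero Dirichlet data (existence from Lemma \ref{lemp} plus the proportionality computation; uniqueness via the Rayleigh-quotient/principal-eigenvalue argument showing that $\psi=(1-k_2)u-(1-k_1)v$ must vanish, so every positive steady state lies on the ray), and then invokes the global stability theorem of Cosner--Lazer \cite{COS} for the Dirichlet competition-diffusion system, which yields convergence of the trajectories to $(u^{**},v^{**})$ for the admissible initial data without any invariant-ray assumption. Your proposal has no substitute for this uniqueness-plus-global-attractivity step, and the comparison mechanism you propose in its place fails as written; fixing it would essentially force you to reprove the result of \cite{COS}.
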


In the theorem above, controllability is again achieved with static controls equal to zero. We have established the existence and uniqueness of a positive equilibrium solution \((u^{**},v^{**})\) to problem \eqref{SP}  with zero Dirichlet boundary conditions. In this case, such a solution will attract every trajectory with initial state \((u_0,v_0)\) ($0\leq u_0\leq 1$, $0\leq v_0\leq a$).

A simple biological interpretation of the result above is that for species with the same reproduction characteristics and diffusion capacity, if the interaction between them has a small effect, and if the length of \(L\) is sufficiently large, then a coexistence state assuming values zero at the boundary will exist and be unique and hence stable. In this controlled situation, neither species can drive the other to extinction regardless of the initial state taken.

%********** unicidade de coexistencia gera controlabilidade (analisar)********** 

Finally, it is crucial to emphasize the significance of non-controllability results. In the present context, broadly speaking, a non-controllability result towards a target  $(\overline{u},\overline{v})$  indicates that at least one initial state cannot be steered to  
$(\overline{u},\overline{v})$, regardless of the boundary conditions. That is, there are no controls acting on the boundary that can bring that initial state closer to the target $(\overline{u},\overline{v})$. These results are obtained by constructing barrier solutions that prevent a certain initial condition from approaching the target.

%\begin{figure}[htbp]
 %   \centering
 %   \begin{minipage}[b]{0.45\textwidth}
  %      \centering
   %     \includegraphics[width=\textwidth]{corazul.png}
    %    \caption{Barrier to prevent population $u$ from reaching extinction}
     %   \label{figura1}
   % \end{minipage}
   % \hfill
   % \begin{minipage}[b]{0.45\textwidth}
    %    \centering
     %   \includegraphics[width=\textwidth]{corred.png}
      %  \caption{Barrier to prevent population $v$ from reaching extinction}
       % \label{figura2}
    %\end{minipage}
%\end{figure}

\begin{figure}[ht!]
  \centering
   \includegraphics[width=0.6\linewidth]{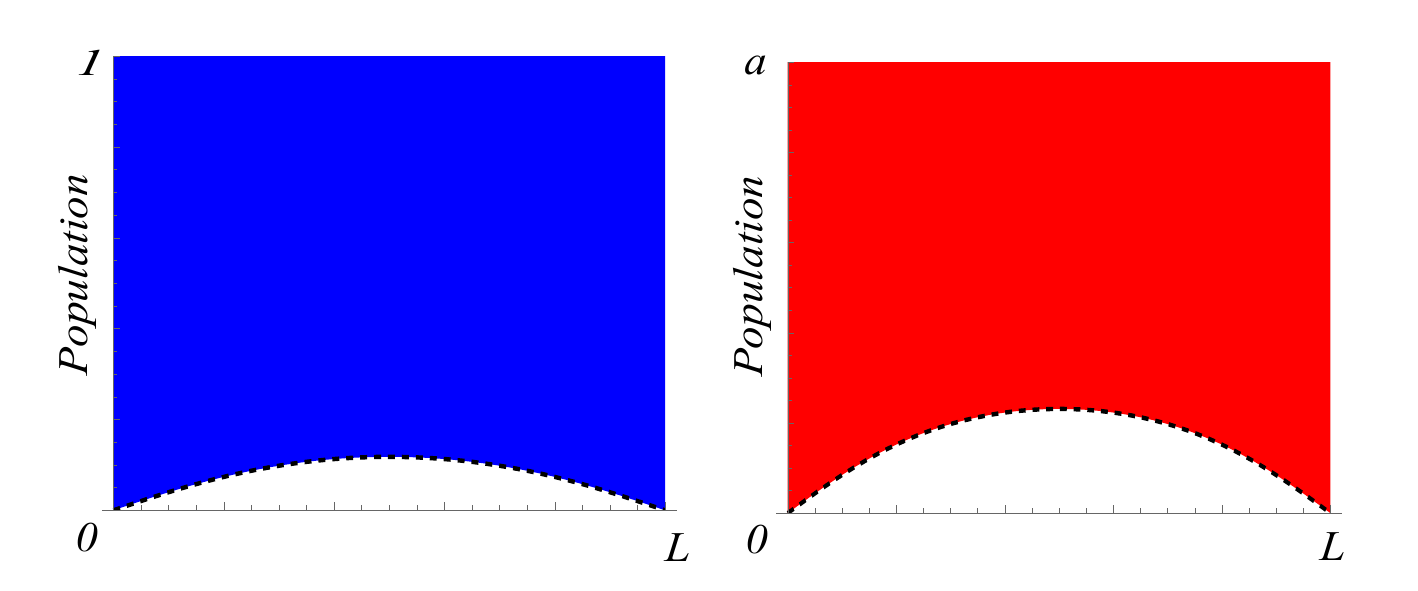}
     %\captionsetup{font=small}
 \caption{Barrier functions that prevent $u$ and/or $v$ from extinction.}
   \label{ilu}
\end{figure}

Figure \ref{ilu} illustrates this interpretation when there is no controllability towards $(0,0)$. In the graph on the left, we have the barrier solution (dashed black line) preventing the population $u$ from approaching $0$. That is, if $(u_0, v_0)$ is an initial datum such that $u_0$ lies in the blue region of the figure, then for any controls satisfying \eqref{res}, the corresponding solution $u$ will remain in the blue region. On the other hand, $v$ may approach $0$, unless $v_0$ is also above its respective barrier solution, that is, in the red region in the graph on the right.
More details about this interpretation can be seen in the following sections.

\section{Proofs of the main results}\label{PR}

\subsection{Preliminaries}

%The regularity of the initial data and the controls at the border, as well as their constraints, ensure the well-posedness of the problem \eqref{SP} \cite{PAO2}.

The dynamics of problem \eqref{SP} under Neumann boundary conditions will be of fundamental importance in the control strategies considered here. Below, we state a well-known result adapted for our problem. As a reference, we cite \cite{YY,lida}.

\begin{theorem}\label{TN}
Consider \eqref{SP} under Neumann boundary conditions, that is
\begin{equation}\label{NC}
u_x(0,t)=u_x(L,t)=0,\ \ v_x(0,t)=v_x(L,t)=0\ \ t\in\mathbb{R}^+.
\end{equation}
If $(\overline{u}(x,t),\overline{v}(x,t))$ is a solution of \eqref{SP}, \eqref{NC} with initial conditions $0\leq u_0\leq 1$, $0\leq v_0\leq a$,  then $0\leq \overline{u}(x,t)\leq 1 $, $0\leq \overline{v}(x,t)\leq a$ for $(x,t)\in [0,L]\times\mathbb{R}^+$ and we have the following asymptotic behaviors of $(\overline{u},\overline{v})$:
\begin{enumerate}[$(i)$]
\item if $k_2<a<\dfrac{1}{k_1}$,
$$\displaystyle\lim_{t\to\infty}(\overline{u}(x,t),\overline{v}(x,t))=(u^*,v^*)$$
uniformly in $[0,L]$;
\item if $k_2>a$,
$$\displaystyle\lim_{t\to\infty}(\overline{u}(x,t),\overline{v}(x,t))=(1,0)$$
uniformly in $[0,L]$;
\item if $a>\dfrac{1}{k_1}$,
$$\displaystyle\lim_{t\to\infty}(\overline{u}(x,t),\overline{v}(x,t))=(0,a)$$
uniformly in $[0,L]$.

\end{enumerate}
\end{theorem}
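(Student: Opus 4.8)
The plan is to derive Theorem~\ref{TN} from the theory of order-preserving (competitive) reaction-diffusion systems, exploiting that under the Neumann conditions \eqref{NC} the spatially homogeneous dynamics of \eqref{SP} is governed by the competition ODE system
\[
\dot U = U(1-U-k_1V),\qquad \dot V = V(a-V-k_2U),
\]
whose phase portrait in the weak competition regime $k_1k_2<1$ is classical. The mechanism I would use is a squeezing argument by \emph{constant} super- and sub-solutions: one traps $(\overline u,\overline v)$ between two pairs of constants that are successively refined and whose common limit is the announced attractor.

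First I would record the a~priori bounds: since $(1,0)$ and $(0,a)$ solve \eqref{SP} and the system preserves the competitive order $(u_1,v_1)\preceq(u_2,v_2)\iff u_1\le u_2,\ v_1\ge v_2$, the parabolic comparison principle yields $0\le\overline u\le 1$ and $0\le\overline v\le a$. Next I would run the monotone iteration. Setting $\overline U_0=1$, $\underline U_0=0$, $\overline V_0=a$, $\underline V_0=0$ and assuming $\underline U_n\le\overline u(\cdot,t)\le\overline U_n$, $\underline V_n\le\overline v(\cdot,t)\le\overline V_n$ for all large $t$, I would observe that $\overline u$ is a subsolution of the scalar logistic problem $w_t=d_1w_{xx}+w(1-w-k_1\underline V_n)$ with Neumann data (since $\overline v\ge\underline V_n$), and that every nonnegative bounded solution of $w_t=d\,w_{xx}+w(\lambda-w)$ under Neumann conditions converges uniformly to $\lambda_+$ (sandwich $w$ between the spatially homogeneous solutions of $\dot m=m(\lambda-m)$ started at $\max_x w(\cdot,0)$ and at $\min_x w(\cdot,0)$). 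This refines the upper bound to $\overline U_{n+1}=(1-k_1\underline V_n)_+$, and symmetrically $\underline U_{n+1}=(1-k_1\overline V_n)_+$, $\overline V_{n+1}=(a-k_2\underline U_n)_+$, $\underline V_{n+1}=(a-k_2\overline U_n)_+$. In case $(i)$ the positive parts are inactive, the recursion is affine with contraction ratio $k_1k_2<1$, and $\overline U_n\downarrow u^*$, $\underline U_n\uparrow u^*$, $\overline V_n\downarrow v^*$, $\underline V_n\uparrow v^*$ with $(u^*,v^*)$ as in \eqref{DH}. In case $(ii)$ (resp.\ $(iii)$), $k_2>a$ forces $a<1/k_1$ (resp.\ $a>1/k_1$ forces $a>k_2$), so one refined bound gets truncated at $0$, and the sequences still converge monotonically to the unique steady pair compatible with the iteration, namely $(1,0)$ (resp.\ $(0,a)$). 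In every case the squeeze $\underline U_n\le\liminf_{t\to\infty}\min_x\overline u\le\limsup_{t\to\infty}\max_x\overline u\le\overline U_n$ (and likewise for $\overline v$) gives the claimed uniform convergence. For case $(i)$ one may alternatively use the Lyapunov functional $\mathcal E(t)=\int_0^L\bigl[\int_{u^*}^{u}\tfrac{s-u^*}{s}\,ds+\gamma\int_{v^*}^{v}\tfrac{s-v^*}{s}\,ds\bigr]dx$ for a suitable $\gamma>0$: the diffusion contribution to $\dot{\mathcal E}$ is nonpositive and the reaction contribution is a nonpositive quadratic form thanks to $k_1k_2<1$, after which LaSalle's invariance principle finishes the argument.

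The step I expect to be most delicate is making the squeeze rigorous at the PDE level: the $n$-th comparison only bounds $\liminf_{t\to\infty}\min_x\overline u$ and $\limsup_{t\to\infty}\max_x\overline u$, not the trajectory for all times, so I would fix $\varepsilon>0$, choose a time $T_n$ after which $\overline u(\cdot,t)$ and $\overline v(\cdot,t)$ lie within $\varepsilon$ of $[\underline U_n,\overline U_n]$ and $[\underline V_n,\overline V_n]$, restart the scalar comparisons from $T_n$ with the slightly enlarged constants, and absorb the accumulated errors using the uniformity of the contraction ratio $k_1k_2<1$. A secondary nuisance is the logarithmic singularity of $\mathcal E$ (and the positivity bookkeeping in the iteration) in the degenerate cases $(ii)$ and $(iii)$, where one component tends to $0$; this is handled by first using the crude bounds and invoking the refined estimates only on the non-vanishing component. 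Since all of this is by now classical for weak-competition Lotka-Volterra systems, one may, as the authors do, ultimately appeal to \cite{YY,lida} for the full verification.
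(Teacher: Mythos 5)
The paper itself does not prove Theorem \ref{TN}: it is presented as a known result ``adapted for our problem'', with the proof delegated to the cited literature \cite{YY,lida}. Your sketch therefore cannot be compared to an in-paper argument; what it does is reconstruct the classical proof from that literature, namely the contracting-rectangles/monotone iteration by constant super- and sub-solutions for competitive systems (a priori bounds from the ordered equilibria $(1,0)$ and $(0,a)$, scalar logistic comparisons refining the rectangle with contraction ratio $k_1k_2<1$, $\varepsilon$-restarts at times $T_n$), plus the standard Goh-type Lyapunov functional as an alternative in case $(i)$. This is the expected route and the mechanics are sound, including your verification that $k_2>a$ forces $a<1/k_1$ (and symmetrically in case $(iii)$), so the truncated iteration indeed has $(1,0)$ (resp. $(0,a)$) as its only admissible fixed point. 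One point you should make explicit, because it is where the argument genuinely needs an extra hypothesis: the lower-bound refinements such as $\underline U_{n+1}=(1-k_1\overline V_n)_+$ compare $\overline u$ from below with the ODE started at $\min_x \overline u(\cdot,T_n)$, so they require $u_0\not\equiv 0$ (and, in case $(i)$, $v_0\not\equiv 0$) together with the strong maximum principle to guarantee strict positivity at positive times before restarting; if $u_0\equiv 0$ then $\overline u\equiv 0$ and the conclusion of $(i)$ fails, so the theorem as stated (both by you and, implicitly, by the paper) presupposes nontrivial initial data. Relatedly, your auxiliary claim that ``every nonnegative bounded solution'' of the Neumann logistic equation converges uniformly to $\lambda_+$ needs the same nontriviality when $\lambda>0$; only the upper half of the sandwich is unconditional. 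With that bookkeeping added, your plan is correct and consistent with the sources the paper relies on.
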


The  existence-comparison theorem below is an adaptation of Theorem 2.3 in \cite{pao}.

\begin{theorem}\label{cos}
Let $(\tilde{u},\tilde{v})$, $(\undertilde{u},\undertilde{v})$ be a pair of smooth functions  such that $\tilde{u}\geq\undertilde{u}\geq 0$ and $\tilde{v}\geq\undertilde{v}\geq 0$. Moreover, suppose that $(\tilde{u},\undertilde{v})$ satisfies 
\begin{equation}\label{SPEX}
\left\{\begin{array}{ll}
\tilde{u}_t\geq d_1\tilde{u}_{xx}+\tilde{u}(1-\tilde{u}-k_1\undertilde{v}),& (x,t)\in (0,L)\times\mathbb{R}^+\\
\undertilde{v}_t\leq d_2\undertilde{v}_{xx}+\undertilde{v}(a-\undertilde{v}-k_2\tilde{u}),& (x,t)\in (0,L)\times\mathbb{R}^+\\
\tilde{u}(x,0)\geq u_0(x),\ \ \undertilde{v}(x,0)\leq v_0(x),& x\in (0,L)\\
\tilde{u}(x,t)\geq 0, \ \ \undertilde{v}(x,t)\leq 0,& (x,t)\in\{0,L\}\times\mathbb{R}^+,
\end{array}\right.
\end{equation}
and that $(\undertilde{u},\tilde{v})$ satisfies
the corresponding reversed inequalities. Then the problem \eqref{SP} under zero Dirichlet boundary conditions has a
unique solution $(u, v)$ such that
$$\undertilde{u}(x,t)\leq u(x,t)\leq \tilde{u}(x,t),\ \ \undertilde{v}(x,t)\leq v(x,t)\leq\tilde{v}(x,t),$$
for $(x,t)\in [0,L]\times\mathbb{R}^+$.
\end{theorem}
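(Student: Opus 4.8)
The plan is to recognize Theorem~\ref{cos} as the coupled upper--lower-solution principle of Pao, specialized to the \emph{competitive} structure of \eqref{SP}. Writing $f_1(u,v)=u(1-u-k_1v)$ and $f_2(u,v)=v(a-v-k_2u)$, one has $\partial_v f_1\le 0$ and $\partial_u f_2\le 0$, so \eqref{SP} is quasi-monotone nonincreasing; the only twist relative to the cooperative case is that the ordered pair of upper/lower solutions is \emph{crossed}, i.e.\ $(\tilde u,\undertilde v)$ is tied to $(\undertilde u,\tilde v)$. I would work on the fixed bounded rectangle $\mathcal R=\{(\xi,\eta):\undertilde u\le \xi\le\tilde u,\ \undertilde v\le\eta\le\tilde v\}$ and first pick constants $K_1,K_2>0$ large enough that $\xi\mapsto K_1\xi+f_1(\xi,\eta)$ and $\eta\mapsto K_2\eta+f_2(\xi,\eta)$ are nondecreasing on $\mathcal R$ (possible since $\partial_\xi f_1$ and $\partial_\eta f_2$ are bounded on $\mathcal R$, the barrier functions being bounded).

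Next I would set up the monotone iteration. Put $(\overline u^{(0)},\underline v^{(0)})=(\tilde u,\undertilde v)$ and $(\underline u^{(0)},\overline v^{(0)})=(\undertilde u,\tilde v)$, and for $n\ge 1$ define $\overline u^{(n)},\overline v^{(n)},\underline u^{(n)},\underline v^{(n)}$ as the unique solutions of the \emph{linear} Cauchy--Dirichlet problems with zero boundary data, initial data $u_0$ or $v_0$, and right-hand sides
\begin{align*}
\partial_t\overline u^{(n)}-d_1\partial_{xx}\overline u^{(n)}+K_1\overline u^{(n)}&=K_1\overline u^{(n-1)}+f_1(\overline u^{(n-1)},\underline v^{(n-1)}),\\
\partial_t\overline v^{(n)}-d_2\partial_{xx}\overline v^{(n)}+K_2\overline v^{(n)}&=K_2\overline v^{(n-1)}+f_2(\underline u^{(n-1)},\overline v^{(n-1)}),
\end{align*}
and the two analogous equations for $\underline u^{(n)},\underline v^{(n)}$ obtained by interchanging all over- and under-bars. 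Using the differential inequalities \eqref{SPEX}, the boundary and initial sign conditions, the choice of $K_1,K_2$, and the parabolic maximum principle, an induction then yields
\[\undertilde u\le\underline u^{(1)}\le\underline u^{(2)}\le\cdots\le\overline u^{(2)}\le\overline u^{(1)}\le\tilde u,\]
and the corresponding monotone chain for the $v$-iterates (with the upper/lower roles interchanged in the coupling, as dictated by the quasi-monotone-nonincreasing structure). Being monotone and uniformly bounded in $\mathcal R$, the sequences converge pointwise to limits $\underline u\le\overline u$, $\underline v\le\overline v$; interior $L^p$ and Schauder estimates promote this to $C^{2,1}_{\mathrm{loc}}$ convergence, so one passes to the limit and concludes that both $(\overline u,\underline v)$ and $(\underline u,\overline v)$ solve \eqref{SP} under zero Dirichlet conditions with data $(u_0,v_0)$, while obeying $\undertilde u\le\underline u$, $\overline u\le\tilde u$, $\undertilde v\le\underline v$, $\overline v\le\tilde v$. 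Finally, since $f_1,f_2$ are locally Lipschitz and every iterate stays in the bounded set $\mathcal R$, the Cauchy--Dirichlet problem has at most one solution (Gronwall applied to the difference of two bounded solutions), forcing $\overline u=\underline u=:u$ and $\overline v=\underline v=:v$; this is the unique solution, it satisfies the asserted sandwich bounds, and it is global in time because $\mathcal R$ is time-independent.

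The main obstacle — and essentially the only place the competitive structure is genuinely used — is the \emph{base step} of the monotone induction together with the correct bookkeeping of the crossed coupling: one must verify $\undertilde u\le\underline u^{(1)}\le\overline u^{(1)}\le\tilde u$ (and the $v$-analogues) directly from \eqref{SPEX}, always pairing each $u$-iterate with the \emph{opposite} $v$-iterate. With the naive, non-crossed coupling the maximum-principle comparisons at this step would fail. Once the base step is secured, the remaining induction, the passage to the limit via parabolic regularity, and the Gronwall uniqueness argument are routine, and one may simply follow the corresponding steps in the proof of Theorem~2.3 of \cite{pao}.
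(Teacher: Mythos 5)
Your proposal is correct and coincides with the paper's approach: the paper offers no independent proof but simply invokes Theorem~2.3 of \cite{pao}, whose proof is exactly the crossed monotone-iteration scheme for quasi-monotone nonincreasing (competitive) systems that you reconstruct, with Gronwall-type uniqueness collapsing the quasi-solution pair $(\overline u,\underline v)$, $(\underline u,\overline v)$ into the unique solution sandwiched by the barriers. The only cosmetic adjustment: since the barriers are merely smooth on $[0,L]\times\mathbb{R}^+$ and need not be globally bounded, choose $K_1,K_2$ on each finite horizon $[0,L]\times[0,T]$ and let $T\to\infty$ using uniqueness to patch, rather than assuming boundedness outright.
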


The theorem below is an adaptation of Theorem 4.1 in \cite{ZPAO}.

\begin{theorem}\label{cpao}
Let $(\tilde{u},\tilde{v})$, $(\undertilde{u},\undertilde{v})$ be a pair of smooth functions  and suppose that
\begin{equation}\label{SPCOM}
\left\{\begin{array}{ll}
\tilde{u}_t\geq d_1\tilde{u}_{xx}+\tilde{u}(1-\tilde{u}-k_1\undertilde{v}),& (x,t)\in (0,L)\times\mathbb{R}^+\\
\undertilde{v}_t\leq d_2\undertilde{v}_{xx}+\undertilde{v}(a-\undertilde{v}-k_2\tilde{u}),& (x,t)\in (0,L)\times\mathbb{R}^+
\end{array}\right.
\end{equation}
and that $(\undertilde{u},\tilde{v})$ satisfies
the corresponding reversed inequalities. Moreover, suppose that
\begin{equation}\label{SPCOM2}
\left\{\begin{array}{ll}
\tilde{u}(x,0)\geq \undertilde{u}(x,0),\ \ \undertilde{v}(x,0)\leq \tilde{v}(x,0),& x\in (0,L)\\
\tilde{u}(x,t)\geq  \undertilde{u}(x,t), \ \ \undertilde{v}(x,t)\leq \tilde{v}(x,t),& (x,t)\in\{0,L\}\times\mathbb{R}^+,
\end{array}\right.\end{equation}
then
$$\undertilde{u}(x,t)\leq  \tilde{u}(x,t),\ \ \undertilde{v}(x,t)\leq \tilde{v}(x,t),$$
for $(x,t)\in [0,L]\times\mathbb{R}^+$.

\end{theorem}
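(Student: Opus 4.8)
The plan is to reduce Theorem~\ref{cpao} to a maximum principle for a \emph{cooperative} linear parabolic system solved by the differences $w:=\tilde u-\undertilde u$ and $z:=\tilde v-\undertilde v$; proving $w\ge 0$ and $z\ge 0$ is precisely the asserted ordering. Since the conclusion is local in time, it suffices to establish $w,z\ge 0$ on an arbitrary cylinder $Q_T:=(0,L)\times(0,T)$ and then let $T\to\infty$. Writing $f(u,v)=u(1-u-k_1v)$ and $g(u,v)=v(a-v-k_2u)$, I would subtract the inequality for $\undertilde u$ (the first component of the reversed pair $(\undertilde u,\tilde v)$) from the one for $\tilde u$, and likewise the inequality for $\undertilde v$ from the one for $\tilde v$, obtaining
\begin{equation*}
w_t-d_1w_{xx}\ \ge\ f(\tilde u,\undertilde v)-f(\undertilde u,\tilde v),\qquad
z_t-d_2z_{xx}\ \ge\ g(\undertilde u,\tilde v)-g(\tilde u,\undertilde v),
\end{equation*}
while \eqref{SPCOM2} gives $w\ge 0$ and $z\ge 0$ on the parabolic boundary $\bigl(\{0,L\}\times(0,T)\bigr)\cup\bigl((0,L)\times\{0\}\bigr)$.

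The decisive step is to rewrite the two right-hand sides as linear combinations of $w$ and $z$ with the right signs. By the mean value theorem there are intermediate values $\xi$ between $\undertilde u,\tilde u$ and $\eta$ between $\undertilde v,\tilde v$ for which
\begin{equation*}
f(\tilde u,\undertilde v)-f(\undertilde u,\tilde v)=a_{11}\,w+a_{12}\,z,\qquad
g(\undertilde u,\tilde v)-g(\tilde u,\undertilde v)=a_{21}\,w+a_{22}\,z,
\end{equation*}
with $a_{11}=f_u(\xi,\undertilde v)=1-2\xi-k_1\undertilde v$ and $a_{22}=g_v(\undertilde u,\eta)=a-2\eta-k_2\undertilde u$ bounded on $\overline{Q_T}$ (the given functions being smooth, hence bounded on the compact set), and, crucially, off-diagonal coefficients $a_{12}=k_1\undertilde u\ge 0$ and $a_{21}=k_2\undertilde v\ge 0$ (non-negative since the lower functions are non-negative in the setting at hand, as in Theorem~\ref{cos}). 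Thus $(w,z)$ is a supersolution of a cooperative linear parabolic system: although the Lotka--Volterra competition nonlinearity is quasimonotone non-increasing, the differences $w,z$ formed from the \emph{mixed} super/subsolution pairs satisfy a quasimonotone non-decreasing system, for which a scalar-type maximum principle is available.

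It then remains to run the standard weighted maximum principle. Put $\varphi:=e^{-\lambda t}w$, $\psi:=e^{-\lambda t}z$ and pick $\lambda$ so large that $a_{11}+a_{12}-\lambda<0$ and $a_{22}+a_{21}-\lambda<0$ on $\overline{Q_T}$; then $\varphi_t-d_1\varphi_{xx}\ge(a_{11}-\lambda)\varphi+a_{12}\psi$ and analogously for $\psi$. If $m:=\min_{\overline{Q_T}}\min\{\varphi,\psi\}<0$, it is attained at some $(x_0,t_0)$ which, by the parabolic-boundary bounds, must satisfy $x_0\in(0,L)$ and $t_0\in(0,T]$; taking (up to exchanging the roles of $u$ and $v$) $\varphi(x_0,t_0)=m$, at that point $\varphi_t\le 0$, $\varphi_{xx}\ge 0$ and $\psi\ge m$, so that
\begin{equation*}
0\ \ge\ \varphi_t-d_1\varphi_{xx}\ \ge\ (a_{11}-\lambda)m+a_{12}\psi\ \ge\ (a_{11}+a_{12}-\lambda)\,m\ >\ 0,
\end{equation*}
a contradiction. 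Hence $w,z\ge 0$ on $\overline{Q_T}$, and letting $T\to\infty$ gives the claim. The one genuinely delicate point is the sign bookkeeping of the second step — seeing that passing to the mixed pairs turns the competitive system into a cooperative one — the rest being a routine adaptation of the comparison arguments in \cite{pao,ZPAO}.
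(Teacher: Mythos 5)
Your proposal is correct, and it supplies a self-contained argument for a statement the paper itself does not prove: the authors simply invoke Theorem 4.1 of \cite{ZPAO}. Your route --- forming the differences $w=\tilde u-\undertilde{u}$, $z=\tilde v-\undertilde{v}$, linearizing via the mean value theorem so that the mixed super/sub pairing turns the competitive nonlinearity into a cooperative linear system for $(w,z)$, and closing with the weighted maximum principle on $(0,L)\times(0,T)$ --- is exactly the classical comparison argument behind the cited result, and each step (the inequality directions after subtraction, the coefficients $a_{12}=k_1\undertilde{u}$, $a_{21}=k_2\undertilde{v}$, and the contradiction at a negative interior minimum of $e^{-\lambda t}w$ or $e^{-\lambda t}z$) checks out. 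The one point you should make explicit rather than parenthetical is the sign hypothesis: the off-diagonal coefficients are non-negative only if $\undertilde{u},\undertilde{v}\ge 0$ (or, with the alternative splitting, $\tilde u,\tilde v\ge 0$), and without some such range restriction the quasimonotone structure --- hence the comparison as literally printed --- is not available, since $f_v=-k_1u$ and $g_u=-k_2v$ change sign. This hypothesis is omitted from the statement of Theorem \ref{cpao} but holds in every application in the paper (all comparison functions lie in $[0,1]\times[0,a]$), so your proof covers the intended scope; it would be cleaner to state the non-negativity (or an invariant-rectangle assumption) up front instead of inside the coefficient computation.
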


For the upcoming theorems, the smallest eigenvalue $\lambda_0$ and its corresponding eigenfunction $\phi$ of the problem below will be important to us.
\begin{equation}\label{AS}
\left\{\begin{array}{ll}
\phi''(x)+\lambda\phi(x)=0,& x\in (0,L)\\
\phi(0)=\phi(L)=0.
\end{array}\right.
\end{equation}

It is easy to see that 
\begin{equation}\label{EL}
\lambda_0=\dfrac{\pi^2}{L^2}
\mbox{ and }
\phi(x)=\sin\left(\frac{\pi}{L}x\right).
\end{equation}

Finally, we state the well-known result regarding the existence of positive solution for the logistic equation with diffusion (see \cite{PAY}).

\begin{lemma}\label{lemp}If $\alpha>\lambda_0$ and $\beta>0$, then  there exists a unique smooth function $z$ such that
\begin{equation}\label{EZP}
\left\{\begin{array}{l}
z''(x)+z(x)(\alpha-\beta z(x))=0\ \  x\in (0,L) \\
z(0)=z(L)=0,\\
z(x)>0,\ \  x\in (0,L).
\end{array}\right.
\end{equation}
\end{lemma}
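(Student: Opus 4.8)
The plan is to establish existence by the classical method of sub- and super-solutions for the scalar semilinear elliptic problem \eqref{EZP}, and uniqueness by exploiting the strict monotonicity of the per-capita growth rate $s\mapsto\alpha-\beta s$ via a Picone-type (Brezis--Oswald) identity. Throughout, $\lambda_0=\pi^2/L^2$ and $\phi(x)=\sin(\pi x/L)>0$ on $(0,L)$ denote the principal Dirichlet eigenpair of \eqref{AS}, as in \eqref{EL}; the hypothesis $\alpha>\lambda_0$ is used only to produce a strictly positive subsolution.

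For existence I would take the subsolution $\underline z=\varepsilon\phi$ and the constant super-solution $\overline z\equiv M$. A direct computation gives
\[
\underline z''+\underline z(\alpha-\beta\underline z)=\varepsilon\phi\,\bigl(\alpha-\lambda_0-\beta\varepsilon\phi\bigr),
\]
which is $\geq 0$ on $(0,L)$ as soon as $0<\varepsilon\leq(\alpha-\lambda_0)/(\beta\|\phi\|_\infty)$ (here $\alpha>\lambda_0$ is essential), while
\[
\overline z''+\overline z(\alpha-\beta\overline z)=M(\alpha-\beta M)\leq 0
\]
whenever $M\geq\alpha/\beta$. Taking in addition $M\geq\varepsilon\|\phi\|_\infty$ ensures $\underline z\leq\overline z$, and both functions satisfy the boundary inequalities at $\{0,L\}$. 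The standard monotone iteration between $\underline z$ and $\overline z$ then produces a solution $z$ of \eqref{EZP} with $\varepsilon\phi\leq z\leq M$; in particular $z>0$ in $(0,L)$ and $z(0)=z(L)=0$, and, the nonlinearity being polynomial, elliptic bootstrapping yields $z\in C^\infty([0,L])$.

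For uniqueness, suppose $z_1,z_2$ are two positive smooth solutions. By Hopf's lemma $z_i'(0)>0$ and $z_i'(L)<0$, so $z_1/z_2$ and $z_2/z_1$ extend to positive continuous functions on $[0,L]$ and, in particular, $z_1^2/z_2\to0$ at the endpoints. The one-dimensional Picone identity
\[
(z_1')^2-\Bigl(\tfrac{z_1^{2}}{z_2}\Bigr)'z_2'=\Bigl(z_1'-\tfrac{z_1}{z_2}\,z_2'\Bigr)^{2}\geq0,
\]
integrated over $(0,L)$, integrated by parts (the boundary terms vanish by the preceding observation), and combined with $-z_i''=z_i(\alpha-\beta z_i)$, gives
\[
\beta\int_0^L z_1^{2}\,(z_2-z_1)\,dx=\int_0^L\Bigl(z_1'-\tfrac{z_1}{z_2}\,z_2'\Bigr)^{2}dx\geq0 .
\]
Interchanging the roles of $z_1$ and $z_2$ and adding the two inequalities yields $-\beta\int_0^L(z_1-z_2)^{2}(z_1+z_2)\,dx\geq0$; since $z_1+z_2>0$ in $(0,L)$, this forces $z_1\equiv z_2$.

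The only mildly delicate step is the vanishing of the boundary terms in the Picone computation, which is why Hopf's lemma (equivalently, the exact linear vanishing rate of the $z_i$ at $x=0,L$) is invoked; alternatively one integrates over $(\delta,L-\delta)$ and lets $\delta\to0^{+}$, the boundary contributions being $O(\delta)$. Everything else is routine: the sign relation $(\alpha-\beta z_1)-(\alpha-\beta z_2)=\beta(z_2-z_1)$ is precisely the strict decrease of the per-capita rate that drives the Brezis--Oswald argument, and the sub-/super-solution scheme for a single semilinear equation is entirely classical (cf.\ \cite{PAY}).
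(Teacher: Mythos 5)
Your proposal is correct, and all the individual steps check out: the computation $\underline z''+\underline z(\alpha-\beta\underline z)=\varepsilon\phi(\alpha-\lambda_0-\beta\varepsilon\phi)\geq 0$ does make $\varepsilon\phi$ a subsolution precisely because $\alpha>\lambda_0$, the constant $M\geq\alpha/\beta$ is a supersolution, monotone iteration plus bootstrapping gives a smooth positive solution, and the Picone/Brezis--Oswald computation, with the boundary terms controlled via Hopf's lemma (or, even more simply in one dimension, via ODE uniqueness: $z(0)=z'(0)=0$ would force $z\equiv 0$), correctly yields $-\beta\int_0^L(z_1-z_2)^2(z_1+z_2)\,dx\geq 0$ and hence uniqueness. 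The comparison with the paper is that there is no proof to compare against: the authors simply invoke this as a classical fact about the diffusive logistic equation and cite \cite{PAY}, so your argument is a self-contained substitute for that citation rather than an alternative to an argument in the text. What your route buys is transparency about where the hypothesis $\alpha>\lambda_0$ enters (only in building the strictly positive subsolution $\varepsilon\phi$) and a uniqueness mechanism (strict monotonicity of the per-capita rate $s\mapsto\alpha-\beta s$) that is exactly the structure the paper later reuses implicitly in the uniqueness part of Theorem \ref{T5}; the cost is length, since the cited result already packages existence and uniqueness. One cosmetic remark: with $\phi(x)=\sin(\pi x/L)$ you have $\|\phi\|_\infty=1$, so your smallness condition on $\varepsilon$ is simply $\varepsilon\leq(\alpha-\lambda_0)/\beta$.
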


\subsection{Proofs of theorems \ref{T1} -- \ref{T5}}

Here we present the proofs of our main theorems; for convenience, we have reversed the order of theorems \ref{T4} and \ref{T3}.

\begin{proof}[Proof of Theorem \ref{T1}]
%The control strategy consists of considering 
%$$(c_u(x,t),c_v(x,t))=(u^*,v^*)$$
%for $(x,t)\in\{0,L\}\times\mathbb{R}^+$. We can prove that \(  (u^*,v^*) \) is the unique stationary solution of \eqref{SP} with these static controls. In fact, it suffices to apply Theorem \ref{cos} assuming $(\tilde{u},\tilde{v})=(u^*,a)$ and $(\undertilde{u},\undertilde{v})=(0,v^*)$. 

%By the main result of [...elisa], every bounded solution of \eqref{SP} converge to a stationary state. Thus, the controllability of the system is achieved.

Consider an initial condition $(u_0(x),v_0(x))$ such that $0\leq u_0(x)\leq 1$, $0\leq v_0(x)\leq a$,  for all $x\in (0,L)$.

The control strategy consists of considering 
$$(c_u(x,t),c_v(x,t))=(\overline{u}(x,t),\overline{v}(x,t))$$
for $(x,t)\in\{0,L\}\times\mathbb{R}^+$ where $(\overline{u}(x,t),\overline{v}(x,t))$ is the solution of \eqref{SP}, \eqref{NC} with the initial conditions given above. Note that by Comparison Principle, $0\leq\overline{u}\leq 1$ and $0\leq\overline{v}\leq a$. The result follows as a consequence of Theorem \ref{TN} $(i)$.
\end{proof}

\begin{proof}[Proof of Theorem \ref{T2}]
We will only demonstrate $(i)$ since the demonstration of $(ii)$ is analogous.

We begin by assuming that
\begin{equation}\label{L}
L\leq\sqrt{\dfrac{d_2}{a}}\pi
\end{equation}
and then we consider the following static controls
$$(c_u(x,t),c_v(x,t))\equiv(1,0)$$
for $(x,t)\in\{0,L\}\times\mathbb{R}^+$. We will prove that, in this case, $(u^s,v^s)\equiv (1,0)$ is the unique stationary state of \eqref{SP} under boundary conditions given by $(u(x,t),v(x,t))=(1,0)$ for $(x,t)\in \{0,L\}\times\mathbb{R}^+$.

%%%%%%%%%%%%%%%%%%%%By the main result of [...elisa], every bounded solution of \eqref{SP} converge to a stationary state. Thus, the controllability of the system will be achieved.

Let $(u^s,v^s)$ be any nonnegative solution of 
\begin{equation}\label{SPS}
\left\{\begin{array}{ll}
d_1u_{xx}+u(1-u-k_1v)=0,& x\in (0,L)\\
d_2v_{xx}+v(a-v-k_2u)=0,& x\in (0,L)\\
u(0)=u(L)=1,\ \ v(0)=v(L)=0.& 
\end{array}\right.
\end{equation}

By multiplying the second equation in \eqref{SPS} by $\phi(x)=\sin(\pi x/L)$ (see \eqref{AS}), integrating over $(0,L)$ and using Green's theorem twice together with the boundary condition $v(0)=v(L)=0$, we obtain
$$-\displaystyle\int_0^Ld_2\phi_{xx}v^sdx=\int_0^L\phi v^s(a-v^s-k_2u^s)dx.$$
By \eqref{AS} and the non-negativity of $u^s$ and $v^s$,
$$\displaystyle\int_0^L\phi v^s\left(\dfrac{d_2\pi^2}{L^2}-a\right)dx\leq -\int_0^L\phi(v^s)^2dx.$$
As we are assuming $\left(\dfrac{d_2\pi^2}{L^2}-a\right)\geq 0$ (see \eqref{L}) we conclude that $v^s\equiv 0$. 

Now, we have the following problem
\begin{equation}\label{SPSu}
\left\{\begin{array}{ll}
d_1u_{xx}+u(1-u)=0& x\in (0,L)\\
u(0)=u(L)=1,&
\end{array}\right.
\end{equation}
and we state that $u^s\equiv 1$ is the only solution. Indeed, if we take $w=1-u^s$, then
\begin{equation}\label{SPSw}
\left\{\begin{array}{ll}
-d_1w_{xx}+w(1-w)=0& x\in (0,L)\\
w(0)=w(L)=0.&
\end{array}\right.
\end{equation}
By multiplying the equation by $w$ and integrating in $(0,L)$,
we get
\begin{equation}\label{SER}
\int_0^L d_1(w_x)^2dx=-\int_0^L w^2(1-w)dx.
\end{equation}
Since $w(0)=w(L)=0$,  \eqref{SER} is true only if $w\equiv 0$. It follows that $u^s\equiv 1$ and $(u^s,v^s)\equiv (1,0)$ is the unique stationary state of \eqref{SP} under boundary conditions given by $(u(x,t),v(x,t))=(1,0)$ for $(x,t)\in \{0,L\}\times\mathbb{R}^+$. By the main result of \cite{JS}, every bounded solution of \eqref{SP} converge to a steady-state. Thus, the controllability of the system is achieved.

If we have $k_2>a$ then  we proceed as in Theorem \ref{T1} but using $(ii)$ from Theorem \ref{TN}.
\end{proof}

\begin{proof}[Proof of Theorem \ref{T3}]
In order to prove $(i)$ we assume \eqref{LT3} and
we take the static controls $$(c_u(x,t),c_v(x,t))=(0,0)$$ for $(x,t)\in\{0,L\}\times\mathbb{R}^+$. Then we can proceed as in the first part of Theorem \ref{T2} to conclude that $(u^s,v^s)\equiv (0,0)$ is the unique stationary state of \eqref{SP} under boundary conditions given by $(u(x,t),v(x,t))=(0,0)$ for $(x,t)\in \{0,L\}\times\mathbb{R}^+$.

For $(ii)$, we prove that \eqref{SP} is not controllable towards $(0,0)$  due to the presence of a barrier function in these conditions. We suppose 
 $$a<\dfrac{1}{k_1}\mbox{ and } L>\sqrt{\dfrac{d_1}{1-ak_1}}\pi$$
  and the proof for the case
  $$a>k_2\mbox{ and }  L>\sqrt{\dfrac{d_2}{a-k_2}}\pi,$$
  is analogous.

Under these conditions, we have built a barrier that blocks  solutions $u$  from approaching $0$ for determined initial conditions. We denote by $\eta_1$ the unique smooth function that satisfies 
\begin{equation}\label{ET2}
\left\{\begin{array}{l}
d_1\eta''(x)+\eta(x)(1- \eta(x)-k_1a)=0,\ \  x\in (0,L) \\
\eta(0)=\eta(L)=0,\\
\eta(x)>0,\ \  x\in (0,L).
\end{array}\right.
\end{equation}
  The existence of $\eta_1$ is guaranteed by Lemma \ref{lemp}. For our convenience, we write
$$d_1\eta_1''(x)+f(\eta_1(x))=0$$
for $x\in (0,L)$ where $f(z)=z(1- z-k_1a)$. The conclusion follows by an application of the comparison principle to scalar equations (e.g., see \cite{PAO2}). First, we note that $\eta_1(x)\leq 1$ since $\tilde{\eta}(x)\equiv 1-k_1a\leq 1$ is a super-solution of \eqref{ET2}. Now, we take an initial condition $(u_0,v_0)$ such that $1\geq u_0(x)\geq \eta_1(x)$ for all $x\in (0,L)$. For any constrained  control $(c_u,c_v)$, the respective solution $(u,v)$, in particular, satisfies, 
\begin{equation}\label{SPZ}
\left\{\begin{array}{ll}
u_t=d_1u_{xx}+u(1-u-k_1v),& (x,t)\in (0,L)\times\mathbb{R}^+\\
u(x,t)=c_u(x,t),& (x,t)\in \{0,L\}\times\mathbb{R}^+\\
u(x,0)=u_0(x).
\end{array}\right.
\end{equation}

We note that
$$u_t=d_1u_{xx}+u(1-u-k_1v)\geq d_1u_{xx}+u(1-u-k_1a)=d_1u_{xx}+f(u).$$

It follows that $u(x,t) \geq \eta_1 (x)$ for all $(x,t)\in (0,L)\times\mathbb{R}^+$ and as $\eta_1(x) > 0$ for all $x \in (0,L)$ we have that it is impossible to drive $u$ from $u_0$ to $0$. Therefore, \eqref{SP} is not controllable towards $(0,0)$.
%By Theorem ....., it is sufficient to find sub- and super-solutions $\undertilde{u}^s$ and $\tilde{u}^s$, respectively. Obviously, $\tilde{u}^s\equiv 1$ is a super-solution and then we are left to find a positive sub-solution. We take $\undertilde{u}^s(x)=\delta\phi(x)$ for some $\delta\leq 1$. Then $\undertilde{u}^s$ satisfies the boundary conditions of \eqref{SPE} and it will be a positive sub-solution if
%$$d_1\delta\phi_{xx}\leq-\delta\phi(1-\delta\phi)$$
\end{proof}

If we assume $a>k_2$ and $  L>\sqrt{\dfrac{d_1}{a-k_2}}\pi$, then it is possible to construct a barrier solution for $v$ in an analogous manner, and again the system will not steer to (0,0) for certain initial conditions. Evidently, it may be of interest to inhibit both $u$ and $v$ from approaching $0$, and this will be possible if
$$L>\max\left\{\sqrt{\dfrac{d_1}{1-ak_1}}\pi,\sqrt{\dfrac{d_2}{a-k_2}}\pi\right\}$$
where, obviously, we are assuming $k_2<a<1/k_1$. This case will be further detailed in the next section.

\begin{proof}[Proof of Theorem \ref{T4}]
We will prove that \eqref{SP} is not controllable towards $(1,0)$. The proof to the target $(0,a)$ is analogous. Again, we show the existence of a barrier function, in this case, the existence of a nontrivial solution $(u^s,v^s)$ to
\begin{equation}\label{SPN}
\left\{\begin{array}{ll}
d_1u_{xx}+u(1-u-k_1v)=0,& x\in (0,L)\\
d_2v_{xx}+v(a-v-k_2u)=0,& x\in (0,L)\\
u(0)=u(L)=1,\ \ v(0)=v(L)=0.& 
\end{array}\right.
\end{equation}
Ir order to show the existence of such $(u^s, v^s)$, we will employ Theorem \ref{cos} for the elliptic case, that is, without terms involving derivatives with respect to $t$ and without requirements at $t=0$. Furthermore, we observe that Theorem \ref{cos} is stated with zero Dirichlet boundary conditions, however, the same holds for non-zero conditions.

Let $\tilde{u}\equiv 1$ and $\undertilde{v}=\delta\phi$ for some $\delta>0$. Then $(\tilde{u},\undertilde{v})$ satisfies the inequalities in \eqref{SPEX} if
\begin{equation}\label{e2}d_2\delta\phi_{xx}+\delta\phi(a-\delta\phi-k_2)\geq 0\end{equation}
since the inequality for \(\tilde{u}\) and the boundary inequalities for \(\tilde{u}\) and \(\undertilde{v}\) are trivially satisfied. Now, \eqref{e2} is equivalent to
$$d_2\dfrac{\pi^2}{L^2}\leq a-\delta\phi-k_2$$
or
$$\delta\phi\leq a -k_2-\dfrac{d_2}{L^2}\pi^2$$
and these inequalities occurs if we take $\delta=a -k_2-\dfrac{d_2}{L^2}\pi^2$ which is positive because \eqref{LM}.

Similarly, the pair $\tilde{v}\equiv a$ and $\undertilde{u}(x)=\eta\phi$ ($\eta>0$) satisfies the reversed inequality in
\eqref{SPEX}  if
$$d_1\eta\phi_{xx}+\eta\phi(1-\eta\phi-k_1a)\geq 0$$
and this is true if $\eta=1-k_ 1a-\dfrac{d_1}{L^2}\pi^2$ which is positive again because \eqref{LM}.

By Theorem \ref{cos}, there exists a solution $(u^s,v^s)$ of \eqref{SPN} such that
$$\eta\phi\leq u^s\leq 1\ \ \mbox{ and }\ \ \delta\phi\leq v^s\leq a.$$

Now, consider a initial condition $(u_0,v_0)$ such that $$0\leq u_0(x)\leq u^s(x) \mbox{ and } a\geq v_0(x)\geq v^s(x)$$ for all $x\in [0,L]$. Then for any controls $(c_u,c_v)$ satisfying \eqref{res}, we can use Theorem \ref{cpao} to conclude that the  solution $(u(x,t),v(x,t))$ of \eqref{SP} with these controls satisfies $u(x,t)\leq u^s(x)$ and $v(x,t)\geq v^s(x)$ for all $(x,t)\in [0,L]\times\mathbb{R}^+$. Note that $u^s\not\equiv 1$ because $v^s\geq \delta\phi$. It follows that, $u(x,t)$ and $v(x,t)$ does not converge to $1$ and $0$, respectively, and the theorem is proved.
\end{proof}

\begin{remark}
Note that Theorem \ref{T4} provides us with a general condition for the non-controllability of \eqref{SP} towards states in which one species goes extinct and the other survives at its maximum capacity. More specifically, this is achieved through the construction of a barrier solution for \( (1,0) \) and another barrier solution for \( (0,a) \). In a less restrictive way, we can relax the hypotheses and achieve, for example, a barrier solution that does not allow driving \( u \) from an initial condition \( u_0 \) to $0$ for any constrained control \( (c_u,c_v) \), but eventually allowing \( v \) to approach \( a \). This is sufficient for non-controllability towards \( (0,a) \). For this, it is enough to assume $$a<\dfrac{1}{k_1}\mbox{ and } L>\sqrt{\dfrac{d_1}{1-ak_1}}\pi$$ and proceed as in Theorem \ref{T3}.
\end{remark}

\begin{proof}[Proof of Theorem \ref{T5}]
The proof of this theorem is an application of Theorem 1.3 of \cite{COS} related to the stability of a stationary solution of \eqref{SP} with zero Dirichlet conditions. So, here, under our assumptions, we consider the controls $(c_u,c_v)\equiv (0,0)$ and it suffices to prove the existence and uniqueness of the coexistence state $(u^{**},v^{**})$ for \eqref{SP} under zero Dirichlet boundary conditions.

First, we prove existence using Lemma \ref{lemp}. 
We note that $\theta$ (see \eqref{uev}) is the unique smooth function that satisfies (recall that, in our case, $\lambda_0=\frac{\pi^2}{L^2}$ and  we are assuming $1/d>\pi^2/L^2$)
$$
\left\{\begin{array}{l}
d\theta''(x)+\theta(x)(1- \theta(x))=0\ \  x\in (0,L) \\
\theta(0)=\theta(L)=0,\\
\theta(x)>0,\ \  x\in (0,L).
\end{array}\right.
$$

A straightforward computation yields that $(u^{**}(x),v^{**}(x))$ given by \eqref{uev}
is a coexistence state for \eqref{SP} under zero Dirichlet boundary conditions.

In order to prove uniqueness, suppose that $(u,v)$ is a stationary solution to equation \eqref{SP} under Dirichlet boundary conditions zero such that $u(x)>0$ and $v(x)>0$ for $x\in (0,L)$.

{\it Claim:} if $z$ is a solution of
\begin{equation}\label{EZPU}
\left\{\begin{array}{l}
z''+z(1- u-v)=0\ \  x\in (0,L) \\
z(0)=z(L)=0,
\end{array}\right.
\end{equation}
then $z\equiv 0$.

Indeed, we note that for $\mu=0$, the problem below 
\begin{equation}\label{EZPW}
\left\{\begin{array}{l}
w''+w(1- u-k_1v)+\mu w=0\ \  x\in (0,L) \\
w(0)=w(L)=0,
\end{array}\right.
\end{equation}
has a positive solution $w=u$. Thus, if $\mu_1$ is the lowest eigenvalue of \eqref{EZPW}, then $\mu_1=0$. By  the variational characterization of $\mu_1$ in terms of Rayleigh quotients (see, for instance, \cite{HIL}), for any $\psi\in C^1(0,L)$, $\psi(0)=\psi(L)=0$, $\psi\not\equiv 0$, we have that
\begin{equation}\label{RAY}0\leq \dfrac{\int_0^L(\psi')^2-(1-u-k_1v)\psi^2dx}{\int_0^L\psi^2dx}.\end{equation}

Now, we multiply \eqref{EZPU} by $z$ and integrate on $(0,L)$. Using integration by parts and \eqref{RAY} we  conclude that
$$\begin{array}{lll}
0&=&\displaystyle\int_0^L(z')^2-z^2(1-u-v)dx\\
&=&\displaystyle\int_0^L(z')^2-z^2(1-u-k_1v)dx + \int_0^Lz^2v(1-k_1)dx\\
&\geq&\displaystyle\int_0^Lz^2v(1-k_1)dx
\end{array}$$
which implies that $z\equiv 0$ and the {\it Claim} is proved. We can say that the steady-states of \eqref{SP} satisfies (recall $a=1$, $d=d_1=d_2$)
\begin{equation}\label{SPP}
\left\{\begin{array}{ll}
du_{xx}+u(1-u-v)+(1-k_1)uv=0& (x,t)\in (0,L)\times\mathbb{R}^+\\
dv_{xx}+v(1-v-u)+(1-k_2)uv=0& (x,t)\in (0,L)\times\mathbb{R}^+.
\end{array}\right.
\end{equation}

Multiplying the first of these equations by $(1-k_2)$ and the second by $(1-k_1)$ and subtracting, we obtain
\begin{equation}\label{psi}
\left\{\begin{array}{ll}
\psi''+\psi(1-u-v),\ \  x\in (0,L)\\
\psi(0)=\psi(L)=0,
\end{array}\right.
\end{equation}
where $\psi=(1-k_2)u-(1-k_1)v$. The {\it Claim} proved above implies that $\psi\equiv 0$. Then $v=ru$ where $r=(1-k_2)/(1-k_1)$. Note that
$$u+k_1v=u+k_1ru=\dfrac{1-k_1k_2}{1-k_1}u.$$
It follows that
$$du''+u\left(1-\left(\dfrac{1-k_1k_2}{1-k_1}\right)u\right)=0.$$
Since $u(0)=u(L)=0$ and $u(x)>0$ for $x\in (0,L)$, by Lemma \ref{lemp} we have that $u(x)=\left(\dfrac{1-k_1}{1-k_1k_2}\right)\theta(x)$ and consequently $v(x)=\left(\dfrac{1-k_2}{1-k_1k_2}\right)\theta(x)$, i.e., $(u(x),v(x))=(u^{**}(x),v^{**}(x))$ for all $x\in [0,L]$ is the unique coexistence state for \eqref{SP} with control $(c_u,c_v)\equiv (0,0)$. As stated above, controllability is a consequence of the stability result that can be seen in  \cite[Theorem 3.1]{COS}.
\end{proof}

%******** estudar o caso intermediario de L no teorem 2.4 para control (1,0) e (0,a).********

\section{Alternative control strategies}\label{CS}

\subsection{Travelling wave strategy towards the state of constant coexistence}\label{TW} For the particular case where, in addition to \eqref{CC}, we have  $a<1$ and $L\leq \sqrt{d_1}\pi$, we can establish a different control strategy in order to reach the target $(u^*,v^*)$. The idea is to use the traveling waves that exist in this case.

A travelling wave with velocity $c$ which connects the steady-states $(0,a)$ and $(u^*,v^*)$ of \eqref{SP}, is a pair
$$(\Psi,\Phi)(x,t):=(U,V)(\xi),\ \ \xi:=x+ct$$
such that $U,V:\mathbb{R}\to \mathbb{R}$ form a solution $(U,V)$ of
\begin{equation}\label{TW1}\left\{\begin{array}{l}
d_1U''(\xi)-cU'(\xi)+U(\xi)(1-U(\xi)-k_1V(\xi))=0\\
d_2V''(\xi)-cV'(\xi)+V(\xi)(a-V(\xi)-k_2U(\xi))=0
\end{array}\right.\end{equation}
for $\xi\in\mathbb{R}$ and
\begin{equation}\label{TW2}\left\{\begin{array}{l}
\displaystyle\lim_{\xi\to-\infty}(U,V)=(0,a)\\\\
\displaystyle\lim_{\xi\to\infty}(U,V)=(u^*,v^*)\\\\
U'(\xi)>0,\ \ V'(\xi)<0,\ \ \xi\in\mathbb{R}.
\end{array}\right.\end{equation}

With the hypotheses $a<1$ and \eqref{CC}, for each $c>2\sqrt{\dfrac{1-ak_1}{1-k_1k_2}}$, we use \cite{tra2} (see also \cite{tra}) to ensure the existence of a traveling wave of \eqref{SP} satisfying \eqref{TW1} and \eqref{TW2}. We recall that any translation of a traveling wave is still a traveling wave.

Let $(u_0,v_0)$ ($0\leq u_0\leq 1$, $0\leq v_0\leq a$) be an initial condition of \eqref{SP}. We start with the following controls:
\begin{equation}\label{CTW}
(\tilde{c_u}(x,t),\tilde{c_v}(x,t))=(0,a),\mbox{ for } (x,t)\in\{0,L\}\times (0,t_1]
\end{equation}
where $t_1>0$ is some instant where the respective solution $(u(x,t),v(x,t))$ satisfies
$$0<u(x,t_1)<u^*\mbox{ and } v^*<v(x,t_1)<a.$$
The existence of $t_1$ is ensured by Theorem \ref{T2} $(ii)$ (recall we assuming $L\leq\sqrt{d_1}\pi$).

Now, we consider a travelling wave $(\Psi,\Phi)$ such that
\begin{equation}\label{ITW}
0<\Psi(x,t_1)<u(x,t_1)\mbox{ and } v(x,t_1)<\Phi(x,t_1)<a
\end{equation}
for $x\in (0,L)$. We take the controls
\begin{equation}\label{C2TW}
(\overline{c_u}(x,t),\overline{c_v}(x,t))=(\Psi(x,t),\Phi(x,t))
\end{equation}
for $(x,t)\in\{0,L\}\times (t_1,\infty)$. We can use  Theorem \ref{cpao} to conclude that
$$\Psi(x,t)\leq u(x,t)\leq u^*\mbox{ and } v^*\leq v(x,t)\leq \Phi(x,t)$$
for $(x,t)\in (0,L)\times (t_1,\infty)$. It follows that $(u,v)$ goes  to $(u^*,v^*)$ as $t\to\infty$ uniformly in $[0,L]$, and the controllability is obtained with the controls defined in \eqref{CTW} and \eqref{C2TW}, namely
$$(c_u(x,t),c_v(x,t))=\left\{\begin{array}{ll}
   (\tilde{c_u}(x,t),\tilde{c_v}(x,t)) & (x,t)\in\{0,L\}\times (0,t_1]\\
     (\overline{c_u}(x,t),\overline{c_v}(x,t)) & (x,t)\in\{0,L\}\times (t_1,\infty).
\end{array}\right.$$

\subsection{Barrier solutions as a control strategy}

As stated in the Introduction, even the non-controllability results presented in theorems \ref{T4} and \ref{T3} can be significant for alternative control strategies. This is due to the underlying cause of the non-controllability, namely the existence and, more importantly, the identification of barrier solutions.

For example, for the single-specie $(1,0)$ in the Theorem \ref{T4}, the barrier  is $(u^s,v^s)$ (solution of \eqref{SPN}). This means that, if for a given objective, it is expected that the system does not evolve to this single-species state regardless of the quantities of species at the boundary of the region, then it is sufficient to start from an initial state \((u_0,v_0)\) such that $0\leq u_0(x)\leq u^s(x)$  and  $a\geq v_0(x)\geq v^s(x)$ for all $x\in [0,L]$.

The same applies to Theorem \ref{T3}. If, for instance, the goal is to prevent the extinction of both species, we can construct barrier functions for this purpose since
\begin{equation}\label{HE}L>\max\{\sqrt{d_1/(1-ak_1)}\pi,\sqrt{d_2/(a-k_2)}\pi\}.\end{equation}
 The proof of this theorem shows that \(\eta_1\) (solution of \eqref{ET2}) is a barrier for \(u\). Similarly, \(\eta_2\), the solution of 
 \begin{equation}\label{ET2S}
\left\{\begin{array}{l}
d_2\eta''(x)+\eta(x)(a- \eta(x)-k_2)=0\ \  x\in (0,L) \\
\eta(0)=\eta(L)=0,\\
\eta(x)>0,\ \  x\in (0,L),
\end{array}\right.
\end{equation}
whose existence is guaranteed by Lemma \ref{lemp}, is a barrier for \(v\). Simulations for this case are implemented in the Section \ref{NS}.

\section{Controllability   in finite time}\label{FT}

In this section our goal is to reach the target $(u^*,v^*)$ in finite time with controls satisfying the constraints \eqref{res}.

Here, the strategy is to start with the controls used in Theorem \ref{T1} and then, when the trajectory is sufficiently close to $(u^*,v^*)$, we use the control derived from a well-known result of exact local controllability, which we state below.

\begin{theorem}\label{T9}
Let $T>0$ fixed. Then there exist constants $C(T)$ and $\delta(T)$ such that, for all targets $(\overline{u},\overline{v})$ steady states of \eqref{SP} and for each initial data $(u_0,v_0)$ ($0\leq u_0\leq 1$, $0\leq v_0\leq a$) in $L^{\infty}(0,L)$ with
$$||(u_0,v_0)-(\overline{u},\overline{v})||_{L^{\infty}}\leq\delta$$
we can find a control $(c_u,c_v)\in L^{\infty}(\{0,L\}\times (0,T))\times L^{\infty}(\{0,L\}\times (0,T))$ such that the solution $(u,v)$ of \eqref{SP} starting at $(u_0,v_0)$ with control $(c_u,c_v)$ satisfies
$$(u(\cdot,T),v(\cdot,T))=(\overline{u},\overline{v}).$$
Furthermore,
$$\max\{|c_u(x,t)-\overline{u}(x)|,\  |c_v(x,t)-\overline{v}(x)|;\ (x,t)\in\{0,L\}\times (0,T)\}\leq C ||(u_0,v_0)-(\overline{u},\overline{v})||_{L^{\infty}}.$$
\end{theorem}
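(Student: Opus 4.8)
The plan is to reduce Theorem~\ref{T9} to a known local exact controllability result for semilinear parabolic systems by a standard change of variables and a fixed-point / Carleman-estimate argument. First I would set $y = u - \overline{u}$ and $z = v - \overline{v}$, so that $(y,z)$ solves a linear parabolic system with space-dependent (but bounded, since $\overline{u},\overline{v}$ are smooth steady states) zeroth-order coefficients, plus quadratic nonlinearities in $(y,z)$, and with boundary data $(c_u - \overline{u}, c_v - \overline{v})$ on $\{0,L\}$; the target becomes $(y(\cdot,T),z(\cdot,T)) = (0,0)$. Since the linearized system around $(\overline{u},\overline{v})$ is a $2\times 2$ cascade-free reaction--diffusion system controlled from the whole boundary $\{0,L\}$ (two scalar boundary controls for two equations, so no minimal-rank/Kalman obstruction arises), its null controllability in any time $T>0$ follows from a global Carleman estimate for the adjoint system; this is classical and can be cited from the literature on boundary controllability of parabolic systems.

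The second step is the passage from the linear to the semilinear problem. I would use the standard Liusternik-type / inverse-function-theorem approach (or, equivalently, a source-term fixed point): one works in a weighted space where solutions of the linearized controlled problem decay like $e^{-C/(T-t)}$ near $t=T$, absorbs the quadratic terms $y^2, yz, z^2$ into the source, and shows the associated map is a contraction on a small ball provided $\|(u_0,v_0)-(\overline{u},\overline{v})\|_{L^\infty} \le \delta$ for $\delta$ small enough depending on $T$. The constant $C(T)$ in the final estimate is exactly the operator norm of the control operator for the linearized problem in these weighted spaces, which depends on $T$ but, crucially, \emph{not} on the particular target $(\overline{u},\overline{v})$: the zeroth-order coefficients $1-2\overline{u}-k_1\overline{v}$, $-k_1\overline{u}$, $a-2\overline{v}-k_2\overline{u}$, $-k_2\overline{v}$ are uniformly bounded over all steady states satisfying \eqref{VU} (they lie in fixed compact ranges), so the Carleman constants are uniform. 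This uniformity is what lets us state a single pair $(C(T),\delta(T))$ valid for all admissible targets simultaneously.

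The main obstacle, and the point requiring genuine care rather than citation, is precisely this uniformity of the constants over the whole family of steady-state targets, together with checking that the cited boundary controllability result applies to our coupled system with \emph{boundary} (not interior) controls acting on the full boundary of the one-dimensional interval. In 1D one can sidestep subtleties by an extension/restriction trick — extend the domain slightly to $(-\varepsilon, L+\varepsilon)$, use distributed interior controls supported near the two added strips, then restrict — reducing everything to the well-documented internal null controllability of semilinear parabolic systems; the $L^\infty$ bound on the boundary controls then comes from parabolic regularity (the interior controls can be taken smooth and bounded, and traces of the solution on $\{0,L\}$ are then $L^\infty$, in fact continuous). I would carry the argument out in that order: (i) change of variables; (ii) cite/establish null controllability of the linearized system with uniform constants via the extension trick and a Carleman estimate; (iii) fixed-point argument for the semilinear terms in weighted spaces; (iv) read off $C(T)$, $\delta(T)$ and the $L^\infty$ control bound, noting the uniformity in the target. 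The quadratic (rather than merely Lipschitz) nature of the nonlinearity is a help here, not a hindrance, since it gives the required superlinear smallness for the contraction.
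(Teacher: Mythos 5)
Your plan is sound and is essentially the same route as the paper's: Theorem \ref{T9} is not proved there but is invoked as an adaptation of Lemma 2.1 of Pighin--Zuazua \cite{PGZ}, whose proof is exactly the linearization--Carleman--fixed-point scheme (with the domain-extension trick to generate boundary controls and their $L^\infty$ bounds) that you outline, here applied to the $2\times 2$ system with one boundary control per equation. The point the paper leaves implicit and that you correctly single out --- uniformity of $C(T)$ and $\delta(T)$ over all steady-state targets --- is handled as you say, since $0\le\overline{u}\le 1$ and $0\le\overline{v}\le a$ keep the linearized coefficients in a fixed compact set.
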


Theorem above is an adaptation of Lemma 2.1 from \cite{PGZ} to our case.

\begin{remark}
We note that the controls $(c_u,c_v)$ provided by Theorem \ref{T9} do not necessarily lie in $[0,1]\times [0,a]$.  As stated in the introduction, the values of $c_u$ and $c_v$ oscillate in a neighborhood of the desired target over time. Therefore, the only possible target that can be reached in finite time using this result is $(u^*,v^*)$, which lies between the bounds established by \eqref{res}.

\end{remark}

Now, we can state the main result of this section.

\begin{theorem}\label{T1F}
If \begin{equation}\label{CCF}k_2<a<\dfrac{1}{k_1}\end{equation}
then \eqref{SP} is controllable in finite time towards $(u^*,v^*)$ defined in \eqref{DH}.
\end{theorem}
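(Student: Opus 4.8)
The strategy is a two-stage control scheme that patches together the asymptotic control of Theorem~\ref{T1} with the local exact controllability of Theorem~\ref{T9}. Since \eqref{CCF} coincides with condition \eqref{CC}, Theorem~\ref{T1} applies and provides, for any admissible initial datum $(u_0,v_0)$ with $0\le u_0\le 1$, $0\le v_0\le a$, constrained controls (namely the Neumann-trace controls) steering the solution asymptotically and uniformly to $(u^*,v^*)$. First I would fix the time horizon $T>0$ appearing in Theorem~\ref{T9} and record the corresponding constants $C=C(T)$ and $\delta=\delta(T)$; the key point is that $(u^*,v^*)$ lies strictly in the interior of the constraint box $[0,1]\times[0,a]$, so there is a number $\rho>0$ (depending only on $a,k_1,k_2$) such that the $\rho$-neighbourhood of $(u^*,v^*)$ in $L^\infty$ is still contained in $[0,1]\times[0,a]$, with room to spare.

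\textbf{Step 1 (bring the trajectory close).} Apply the control of Theorem~\ref{T1} on a time interval $[0,t_1]$. By uniform convergence, we may choose $t_1$ large enough that
\begin{equation}\label{closeF}
\|(u(\cdot,t_1),v(\cdot,t_1))-(u^*,v^*)\|_{L^\infty(0,L)}\le \min\left\{\delta,\ \frac{\rho}{C+1}\right\}.
\end{equation}
Throughout $[0,t_1]$ the controls satisfy \eqref{res} (they are the boundary values of a solution obeying \eqref{VU}), and the state stays in $[0,1]\times[0,a]$, so this portion of the trajectory is admissible.

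\textbf{Step 2 (exact local controllability, checking the constraints).} Now treat $(u(\cdot,t_1),v(\cdot,t_1))$ as a new initial datum and apply Theorem~\ref{T9} with target $(\overline u,\overline v)=(u^*,v^*)$ on the time window $[t_1,t_1+T]$. By \eqref{closeF} the smallness hypothesis $\|(u(\cdot,t_1),v(\cdot,t_1))-(u^*,v^*)\|_{L^\infty}\le\delta$ holds, so there is a control $(c_u,c_v)$ driving the state exactly to $(u^*,v^*)$ at time $T_{\mathrm{fin}}:=t_1+T$. The concatenated control on $[0,T_{\mathrm{fin}}]$ (Theorem~\ref{T1} control on $[0,t_1]$, then this local control on $[t_1,T_{\mathrm{fin}}]$) achieves $(u(\cdot,T_{\mathrm{fin}}),v(\cdot,T_{\mathrm{fin}}))=(u^*,v^*)$. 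It remains to verify that the local control still obeys \eqref{res}. Here the quantitative bound in Theorem~\ref{T9} is essential: for $(x,t)\in\{0,L\}\times(t_1,T_{\mathrm{fin}})$,
\begin{equation}\label{ctrlboundF}
\max\{|c_u(x,t)-u^*|,\ |c_v(x,t)-v^*|\}\le C\,\|(u(\cdot,t_1),v(\cdot,t_1))-(u^*,v^*)\|_{L^\infty}\le \frac{C}{C+1}\,\rho<\rho,
\end{equation}
so $(c_u,c_v)$ stays within the $\rho$-neighbourhood of $(u^*,v^*)$, hence inside $[0,1]\times[0,a]$, i.e.\ \eqref{res} holds. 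This proves controllability in finite time towards $(u^*,v^*)$.

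\textbf{Main obstacle.} The delicate point is precisely the constraint compatibility in Step~2: Theorem~\ref{T9} produces controls that oscillate around the target and are \emph{a priori} not confined to $[0,1]\times[0,a]$, which is exactly why finite-time control fails for the boundary targets $(1,0)$, $(0,a)$, $(0,0)$. The argument works for $(u^*,v^*)$ only because that state is interior; one must quantify the interior distance $\rho$ and tune $t_1$ in Step~1 so that the Lipschitz bound \eqref{ctrlboundF} keeps the local control admissible. A minor technical matter is ensuring the two control pieces concatenate into an $L^\infty(\{0,L\}\times(0,T_{\mathrm{fin}}))$ function and that the corresponding solution is well defined across $t=t_1$, which follows from the well-posedness and comparison framework of Section~\ref{PR} since at $t_1$ the state is bounded and admissible.
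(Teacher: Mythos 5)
Your proposal is correct and follows essentially the same two-stage strategy as the paper: steer asymptotically via Theorem \ref{T1} until the state is within $\delta(T)$ of $(u^*,v^*)$, then apply the local exact controllability of Theorem \ref{T9}, using its quantitative bound together with the fact that $(u^*,v^*)$ lies strictly inside $[0,1]\times[0,a]$ to keep the local control admissible. Your $\rho/(C+1)$ bookkeeping is just a repackaging of the paper's explicit smallness condition \eqref{epsi} on $\epsilon$.
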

\begin{proof}
    The proof is an application of theorems \ref{T1} and \ref{T9}. First, we fix $T>0$ and consider $C(T)$ and $\delta(T)$ given in Theorem \ref{T9} when applied in our problem \eqref{SP}. Given an initial condition $(u_0,v_0)$, we use Theorem \ref{T1} to ensure that there exists a control $(\overline{c}_u,\overline{c}_v)$ satisfying \eqref{res} such that the solution $(u,v)$ of \eqref{SP} starting from $(u_0,v_0)$ with control $(\overline{c}_u,\overline{c}_v)$ satisfies
$$||(u(\cdot,t_1),v(\cdot,t_1))-(u^*,v^*)||_{L^{\infty}}\leq \epsilon \leq\delta(T)$$
for some $t_1>0$, where $\epsilon>0$ is small enough such that 
\begin{equation}\label{epsi}
    0\leq u^*-C(T)\epsilon,\ \ 0\leq v^*-C(T)\epsilon, \ \ u^*+C(T)\epsilon\leq 1\mbox{ and } v^*+C(T)\epsilon\leq a.
\end{equation}
 Now, for $t>t_1$, we use Theorem \ref{T9} to obtain a control $(\tilde{c}_u(x,t),\tilde{c}_v(x,t))$ such that
$$(u(\cdot,t_1+T),v(\cdot,t_1+T))=(u^*,v^*).$$

We note that by Theorem \ref{T9} and \eqref{epsi},
$$0\leq u^*(x)-C(T)\epsilon\leq c_u(x,t)\leq u^*(x)+C(T)\epsilon\leq 1$$
and
$$0\leq v^*(x)-C(T)\epsilon\leq c_v(x,t)\leq u^*(x)+C(T)\epsilon\leq a$$
for all $(x,t)\in\{0,L\}\times (0,t_1+T)$. 

Denoting $t_1 + T$ by $\tilde{T}$, we define the control $(c_u,c_v)\in L^{\infty}(\{0,L\}\times (0,\tilde{T}))\times L^{\infty}(\{0,L\}\times (0,\tilde{T}))$ that drives the initial condition $(u_0,v_0)$ to the target $(u^*,v^*)$ in finite time $\tilde{T}$ by
\begin{equation}
(c_u(x,t),c_v(x,t))=\left\{\begin{array}{ll}
(\overline{c}_u(x,t),\overline{c}_v(x,t)),& (x,t)\in\{0,L\}\times (0,t_1)\\
(\tilde{c}_u(x,t),\tilde{c}_v(x,t)),& (x,t)\in\{0,L\}\times (t_1,\tilde{T}).
\end{array}\right.
\end{equation}
\end{proof}

%\begin{remark}As usual, the controls in this section, dedicated to the finite-time controllability case, are of class $L^{\infty}$. This is necessary due to Theorem \ref{T9}. In the previous sections, it was possible to construct continuous controls on $\{0,L\}\times\mathbb{R}^+$.\end{remark}

\section{Extension to a higher dimension}\label{HD}
In this section, we discuss the same problem in a smooth bounded domain $\Omega\subset\mathbb{R}^n$ ($n>1$). In fact, theorems similar to those stated so far are also valid in the $n$-dimensional case. In summary, Theorem \ref{TN} (as well as Theorems \ref{cos}, \ref{cpao} and Lemma \ref{lemp}) is valid in $\Omega$ (see the references \cite{YY,pao,ZPAO,PAY}), and thus the proofs that use the behavior of solutions with Neumann boundary conditions can be repeated for the $n$-dimensional case.

Similarly, the role of the interval length $L$ appears as a consequence of the relationship between $L$ and the smallest eigenvalue of problem \eqref{AS} (see \eqref{EL} also), which can be considered in $\Omega$, namely
\begin{equation}\label{ASO}
\left\{\begin{array}{ll}
\Delta\phi(x)+\lambda\phi(x)=0,& x\in\Omega\\
\phi(x)=0,& x\in\partial\Omega.
\end{array}\right.
\end{equation}

In what follows, our main results are stated for the $n$-dimensional case. Theorem \ref{T1} can be stated in an identical manner, and if $\lambda_0$ is the smallest eigenvalue of \eqref{ASO}, then theorems \ref{T2}, \ref{T4}, \ref{T3}, and \ref{T5} can be written, respectively, as below.

\begin{theorem}\label{T2O}

\begin{enumerate}[$(i)$]
\item If $\lambda_0\geq a/d_2$ or
 $k_2>a$ then \eqref{SP} in $\Omega$ is controllable towards $(1,0)$.
 \item If $\lambda_0\geq 1/d_1$ or 
 $k_1>\dfrac{1}{a}$ then \eqref{SP} in $\Omega$ is controllable towards $(0,a)$.
 \end{enumerate}
\end{theorem}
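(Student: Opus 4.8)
The plan is to mirror the one–dimensional proof of Theorem~\ref{T2}, replacing the explicit eigenpair \eqref{EL} by the abstract eigenpair $(\lambda_0,\phi)$ of \eqref{ASO}, and to replace integration by parts on the interval with Green's identity on $\Omega$. We only treat $(i)$, since $(ii)$ is analogous with the roles of $u$ and $v$ interchanged.

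First, suppose $\lambda_0\geq a/d_2$. Fix the static controls $(c_u,c_v)\equiv(1,0)$ on $\partial\Omega\times\mathbb{R}^+$. As in Theorem~\ref{T2}, I would show that $(u^s,v^s)\equiv(1,0)$ is the unique nonnegative steady state of \eqref{SP} in $\Omega$ with these boundary data. For any nonnegative steady state $(u^s,v^s)$, multiply the $v$--equation by the first Dirichlet eigenfunction $\phi>0$ of \eqref{ASO} and integrate over $\Omega$; two applications of Green's identity, using $v^s=0$ on $\partial\Omega$, give
$$\int_\Omega d_2(\Delta\phi)\,v^s\,dx=-\int_\Omega \phi\,v^s(a-v^s-k_2u^s)\,dx,$$
hence, using $\Delta\phi=-\lambda_0\phi$ and $u^s\geq0$,
$$\int_\Omega \phi\,v^s\bigl(d_2\lambda_0-a\bigr)\,dx\leq -\int_\Omega \phi\,(v^s)^2\,dx.$$
Since $d_2\lambda_0-a\geq 0$ and $\phi>0$ in $\Omega$, the left side is nonnegative while the right side is nonpositive, forcing $v^s\equiv 0$. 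Then $u^s$ solves $d_1\Delta u^s+u^s(1-u^s)=0$ with $u^s=1$ on $\partial\Omega$; setting $w=1-u^s$ gives $-d_1\Delta w+w(1-w)=0$, $w=0$ on $\partial\Omega$, and multiplying by $w$ and integrating yields $\int_\Omega d_1|\nabla w|^2\,dx=-\int_\Omega w^2(1-w)\,dx$, which is possible only if $w\equiv0$, i.e. $u^s\equiv 1$. Uniqueness of the steady state is thus established. Invoking the convergence-to-equilibrium result of \cite{JS}, which applies in bounded domains, every bounded solution of the controlled system converges to a steady state, hence to $(1,0)$, proving controllability towards $(1,0)$.

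For the alternative hypothesis $k_2>a$, I would instead argue exactly as in Theorem~\ref{T1}: take the boundary controls to equal the trace on $\partial\Omega$ of the solution $(\overline{u},\overline{v})$ of \eqref{SP} under Neumann conditions with the same initial data; the comparison principle keeps $0\leq\overline u\leq1$, $0\leq\overline v\leq a$, and the $n$-dimensional analogue of Theorem~\ref{TN}$(ii)$ (valid in $\Omega$ by the references cited at the start of Section~\ref{HD}) gives $(\overline u,\overline v)\to(1,0)$ uniformly, which is the required controllability.

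The only genuinely new point compared with the one–dimensional case is the replacement of the explicit test function $\sin(\pi x/L)$ by $\phi$; the key facts needed are that $\phi>0$ in $\Omega$ (Krein--Rutman / Hopf lemma for the principal Dirichlet eigenfunction) and $\Delta\phi=-\lambda_0\phi$, both standard. The main obstacle, if any, is purely bibliographic rather than mathematical: one must make sure that the convergence result of \cite{JS} and Theorem~\ref{TN} are quoted in a form valid on a smooth bounded $\Omega\subset\mathbb{R}^n$, which is indeed the generality in which they are available. No new estimates are required.
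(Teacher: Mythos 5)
Your proposal is correct and follows essentially the same route as the paper: Section~\ref{HD} obtains Theorem~\ref{T2O} precisely by repeating the proof of Theorem~\ref{T2} with the principal Dirichlet eigenpair $(\lambda_0,\phi)$ of \eqref{ASO} in place of \eqref{EL} (Green's identity plus the $w=1-u^s$ energy argument, then convergence to the unique steady state via \cite{JS}), and by using the $\Omega$-valid version of Theorem~\ref{TN}$(ii)$ for the case $k_2>a$, exactly as you do.
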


\begin{theorem}\label{T4O}
If $k_2<a<1/k_1$ and
$$\lambda_0<\min\left\{\dfrac{1-ak_1}{d_1},\dfrac{a-k_2}{d_2}\right\}$$
then \eqref{SP} in $\Omega$ is not controllable towards either $(1,0)$ or $(0,a)$. 
\end{theorem}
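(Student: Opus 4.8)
The plan is to repeat the argument of Theorem \ref{T4} essentially verbatim, with the one-dimensional eigenfunction $\phi(x)=\sin(\pi x/L)$ replaced by the first eigenfunction $\phi$ of \eqref{ASO}, and the quantity $\pi^2/L^2$ replaced by $\lambda_0$. Concretely, I would again construct a nontrivial barrier steady state $(u^s,v^s)$ of the elliptic system
\begin{equation*}
\left\{\begin{array}{ll}
d_1\Delta u+u(1-u-k_1v)=0,& x\in\Omega\\
d_2\Delta v+v(a-v-k_2u)=0,& x\in\Omega\\
u=1,\ v=0,& x\in\partial\Omega,
\end{array}\right.
\end{equation*}
using the elliptic version of Theorem \ref{cos} (valid in $\Omega$ by the references cited at the start of Section \ref{HD}). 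The ordered pair $(\tilde u,\undertilde v)=(1,\delta\phi)$ satisfies the required differential inequality provided $d_2\lambda_0\le a-k_2-\delta\phi$ on $\Omega$; since $0<\phi\le\|\phi\|_\infty$, it suffices to choose $\delta>0$ small, which is possible precisely because $\lambda_0<(a-k_2)/d_2$. Symmetrically $(\undertilde u,\tilde v)=(\eta\phi,a)$ satisfies the reversed inequalities for small $\eta>0$ because $\lambda_0<(1-ak_1)/d_1$. Here I normalize $\phi>0$ in $\Omega$, which is legitimate since $\lambda_0$ is the principal eigenvalue of \eqref{ASO}.

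Having produced $(u^s,v^s)$ with $\eta\phi\le u^s\le 1$ and $\delta\phi\le v^s\le a$, and noting $u^s\not\equiv 1$ because $v^s\ge\delta\phi>0$ in $\Omega$ forces the first equation to be nontrivial, I would then take any initial datum $(u_0,v_0)$ with $0\le u_0\le u^s$ and $v^s\le v_0\le a$. For any admissible controls $(c_u,c_v)$ satisfying \eqref{res} we have $0\le c_u\le 1= u^s$ and $v^s\ge 0\le c_v$ on $\partial\Omega\times\mathbb{R}^+$ (using $u^s=1$ and $v^s=0$ on the boundary), so the $n$-dimensional analogue of the comparison Theorem \ref{cpao} gives $u(x,t)\le u^s(x)$ and $v(x,t)\ge v^s(x)$ for all $(x,t)\in\overline\Omega\times\mathbb{R}^+$. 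Since $u^s\not\equiv 1$ and $v^s\not\equiv 0$, the trajectory cannot converge uniformly to $(1,0)$, which is the claimed non-controllability towards $(1,0)$; the case of $(0,a)$ is obtained by the symmetric construction with boundary data $u=0$, $v=a$.

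The only genuinely new point compared with the one-dimensional case is that the explicit eigenpair \eqref{EL} is no longer available, so one must invoke the general spectral theory for \eqref{ASO}: existence of a smallest eigenvalue $\lambda_0>0$ with a principal eigenfunction $\phi$ that can be chosen strictly positive in $\Omega$, together with the fact that the same comparison and existence theorems hold on $\Omega$. Granting these standard facts (as the paper does at the outset of Section \ref{HD}), the estimates are identical to those in the proof of Theorem \ref{T4}, with $\lambda_0$ in place of $\pi^2/L^2$, so I do not expect any real obstacle; the main thing to be careful about is that $\phi$ need not be bounded below away from $0$ near $\partial\Omega$, but since the inequalities $d_2\lambda_0\le a-k_2-\delta\phi$ and $d_1\lambda_0\le 1-ak_1-\eta\phi$ are required pointwise and $\phi$ attains its supremum in the interior, choosing $\delta=(a-k_2-d_2\lambda_0)/\|\phi\|_\infty$ and $\eta=(1-ak_1-d_1\lambda_0)/\|\phi\|_\infty$ still works.
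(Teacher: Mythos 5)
Your proposal is correct and follows essentially the same route as the paper: the paper proves the one-dimensional Theorem \ref{T4} by constructing the barrier steady state $(u^s,v^s)$ from the sub/supersolution pairs $(1,\delta\phi)$ and $(\eta\phi,a)$ and then invoking the comparison theorem, and for Theorem \ref{T4O} it simply asserts that this argument carries over to $\Omega$ with $\pi^2/L^2$ replaced by $\lambda_0$ and $\sin(\pi x/L)$ by the principal Dirichlet eigenfunction, which is exactly the substitution you carry out (including the correct normalization $\delta=(a-k_2-d_2\lambda_0)/\|\phi\|_\infty$, $\eta=(1-ak_1-d_1\lambda_0)/\|\phi\|_\infty$).
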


\begin{theorem}\label{T3O}

  \begin{enumerate}[$(i)$]
  
  \item If  $$ \lambda_0\geq\max\left\{1/d_1,a/d_2\right\}$$
  then the system \eqref{SP} in $\Omega$ is controllable towards $(0,0)$;
  \item if 
  %\begin{equation}\label{LM}L>\min\left\{\sqrt{\dfrac{d_1}{1-ak_1}}\pi,\sqrt{\dfrac{d_2}{a-k_2}}\pi\right\}\end{equation}

  $$a<\dfrac{1}{k_1}\mbox{ and } \lambda_0<\dfrac{1-ak_1}{d_1}$$
  or
  $$a>k_2\mbox{ and }  \lambda_0<\dfrac{a-k_2}{d_2},$$
  then the system \eqref{SP} in $\Omega$ is not controllable towards $(0,0)$.
\end{enumerate}
\end{theorem}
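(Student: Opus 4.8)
The plan is to transfer each of the one-dimensional proofs of Theorems \ref{T2}, \ref{T4}, \ref{T3}, and \ref{T5} to the domain $\Omega$ by systematically replacing the explicit eigenpair $\lambda_0=\pi^2/L^2$, $\phi(x)=\sin(\pi x/L)$ of \eqref{AS} with the principal eigenpair $(\lambda_0,\phi)$ of \eqref{ASO}, where $\phi>0$ in $\Omega$ and $\phi=0$ on $\partial\Omega$. Every auxiliary result used in those proofs — the Neumann asymptotics of Theorem \ref{TN}, the existence--comparison Theorem \ref{cos}, the comparison Theorem \ref{cpao}, the convergence-to-equilibrium result of \cite{JS}, Lemma \ref{lemp} on the diffusive logistic equation, and the stability result of \cite{COS} — holds verbatim on any smooth bounded $\Omega$ (with the same references), so the only genuine change is bookkeeping with the geometry-dependent constant $\lambda_0$.

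Concretely, I would proceed as follows. For Theorem \ref{T2O}(i): take static controls $(1,0)$; multiplying the $v$-equation of the stationary system by the principal eigenfunction $\phi$, integrating over $\Omega$, and using Green's identity twice gives $\int_\Omega \phi v^s(\lambda_0 d_2 - a)\,dx \le -\int_\Omega \phi (v^s)^2\,dx$, and the hypothesis $\lambda_0\ge a/d_2$ forces $v^s\equiv 0$; the reduced scalar problem $d_1\Delta u^s + u^s(1-u^s)=0$ with $u^s=1$ on $\partial\Omega$ has the unique solution $u^s\equiv 1$ by the same $w=1-u^s$ energy argument (here one uses $\lambda_0>0$, i.e., $\int_\Omega|\nabla w|^2 \ge \lambda_0\int_\Omega w^2$), and then \cite{JS} yields convergence; the case $k_2>a$ follows from Theorem \ref{TN}(ii) exactly as in one dimension. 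Theorem \ref{T3O}(i) is identical with the pair $(0,0)$ and the two conditions $\lambda_0\ge 1/d_1$, $\lambda_0\ge a/d_2$. For the non-controllability statements (Theorems \ref{T4O} and \ref{T3O}(ii)): build the sub/supersolutions $\undertilde v=\delta\phi$, $\undertilde u=\eta\phi$ using $-\Delta\phi=\lambda_0\phi$; the sign conditions $\delta = a-k_2-\lambda_0 d_2>0$ and $\eta = 1-ak_1-\lambda_0 d_1>0$ are now precisely the hypothesis $\lambda_0<\min\{(1-ak_1)/d_1,(a-k_2)/d_2\}$, and Theorems \ref{cos} and \ref{cpao} produce the barrier $(u^s,v^s)$ with $\eta\phi\le u^s\le 1$, $\delta\phi\le v^s\le a$ that traps initial data below/above it; the scalar barrier $\eta_1$ for $(0,0)$ comes from Lemma \ref{lemp} applied with $\alpha=1-ak_1>\lambda_0$, $\beta=1$ (and symmetrically $\eta_2$ with $\alpha=a-k_2>\lambda_0$). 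Finally Theorem \ref{T5} carries over with $a=1$, $d_1=d_2=d$, $\lambda_0<1/d$: Lemma \ref{lemp} gives the profile $\theta$ with $d\Delta\theta+\theta(1-\theta)=0$, $(u^{**},v^{**})$ as in \eqref{uev} solves the zero-Dirichlet steady system, the Rayleigh-quotient/principal-eigenvalue \textit{Claim} argument (now on $\Omega$) forces $\psi=(1-k_2)u-(1-k_1)v\equiv 0$ whence uniqueness, and \cite[Theorem 3.1]{COS} gives global attractivity, hence controllability with $(c_u,c_v)\equiv(0,0)$.

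The main obstacle is not any single step but ensuring that the \emph{qualitative} facts on which the one-dimensional arguments silently rely remain true on general $\Omega$: that \eqref{ASO} has a simple principal eigenvalue with an eigenfunction of one sign (standard elliptic theory), that the diffusive logistic equation has a unique positive solution precisely when the growth rate exceeds $\lambda_0$ (Lemma \ref{lemp} and its cited source do hold on $\Omega$), and that the cited dynamical results — Theorem \ref{TN}, the comparison Theorems \ref{cos}--\ref{cpao}, convergence to steady states \cite{JS}, and stability \cite{COS} — are stated or readily extend to bounded smooth domains. Granting these (which the referenced literature supplies), each proof is a line-by-line translation; I would present it compactly by pointing to the one-dimensional proof and indicating the single substitution $\pi^2/L^2\leadsto\lambda_0$ together with the resulting form of the threshold inequalities, rather than rewriting every integral identity.
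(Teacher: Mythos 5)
Your proposal matches the paper's own treatment: Section \ref{HD} justifies Theorem \ref{T3O} (and its companions) precisely by noting that Theorems \ref{TN}, \ref{cos}, \ref{cpao} and Lemma \ref{lemp} remain valid on a smooth bounded $\Omega$, and by substituting the principal Dirichlet eigenvalue $\lambda_0$ of \eqref{ASO} for $\pi^2/L^2$ in the one-dimensional proofs, exactly as you describe. Only a trivial normalization in your sketch needs fixing: for the scalar barrier $\eta_1$ one applies Lemma \ref{lemp} after dividing by $d_1$, so the condition is $\alpha=(1-ak_1)/d_1>\lambda_0$ (not $\alpha=1-ak_1>\lambda_0$), which is precisely the stated hypothesis $\lambda_0<(1-ak_1)/d_1$, and similarly for $\eta_2$ with $(a-k_2)/d_2$.
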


\begin{theorem}\label{T5O}

  If $a=1$, $d_1=d_2=d>0$ and $ \lambda_0< 1/d$,
  then  \eqref{SP} in $\Omega$ is  controllable towards  an specific heterogeneous coexistence state $(u^{**},v^{**})$ defined in \eqref{uev} considered in $\Omega$.

\end{theorem}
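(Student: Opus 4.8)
The plan is to follow the proof of Theorem \ref{T5} line by line, replacing the one-dimensional operator $d\,\partial_{xx}$ by $d\,\Delta$ and the quantity $\pi^2/L^2$ by the principal Dirichlet eigenvalue $\lambda_0$ of $-\Delta$ on $\Omega$ from \eqref{ASO}. As before, I would take the static controls $(c_u,c_v)\equiv(0,0)$ on $\partial\Omega\times\mathbb{R}^+$, so that it suffices to prove existence, uniqueness, and global attractivity of a positive equilibrium $(u^{**},v^{**})$ of \eqref{SP} in $\Omega$ under zero Dirichlet boundary conditions.

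\emph{Existence.} Since $a=1$ and $d_1=d_2=d$, the problem $d\,\Delta\theta+\theta(1-\theta)=0$ in $\Omega$, $\theta=0$ on $\partial\Omega$, $0<\theta<1$, rewrites as $\Delta\theta+\theta(\tfrac{1}{d}-\tfrac{1}{d}\theta)=0$, and the $n$-dimensional version of Lemma \ref{lemp} (valid on any smooth bounded $\Omega$, as recalled in Section \ref{HD}) produces a unique such $\theta$ precisely because $\tfrac{1}{d}>\lambda_0$. Substituting $(u^{**},v^{**})$ from \eqref{uev} with this $\theta$ then verifies directly that it solves the stationary system in $\Omega$ with zero Dirichlet data.

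\emph{Uniqueness.} Let $(u,v)$ be any positive stationary solution. The \emph{Claim} of the proof of Theorem \ref{T5} carries over verbatim: $w=u$ is a positive solution of the linear problem $d\,\Delta w+w(1-u-k_1v)+\mu w=0$ with $\mu=0$, so the principal eigenvalue of that operator is $\mu_1=0$; the Rayleigh-quotient characterization of $\mu_1$, which holds on any smooth bounded domain, then gives $\int_\Omega(|\nabla\psi|^2-\tfrac{1}{d}(1-u-k_1v)\psi^2)\,dx\ge0$ for all admissible $\psi$, and testing with $\psi=z$, where $z$ solves $d\,\Delta z+z(1-u-v)=0$ with zero data, together with $1-k_1>0$, forces $z\equiv0$. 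Combining the two stationary equations exactly as in \eqref{SPP}--\eqref{psi} shows that $\psi:=(1-k_2)u-(1-k_1)v$ solves such a problem, hence $\psi\equiv0$, i.e.\ $v=ru$ with $r=(1-k_2)/(1-k_1)$; substituting back, $u$ solves $d\,\Delta u+u(1-\tfrac{1-k_1k_2}{1-k_1}u)=0$ with zero data, so by Lemma \ref{lemp} $u=\tfrac{1-k_1}{1-k_1k_2}\theta$ and therefore $(u,v)=(u^{**},v^{**})$.

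\emph{Controllability.} With these zero controls, controllability towards $(u^{**},v^{**})$ is the statement that this unique positive equilibrium attracts every trajectory with $0\le u_0\le1$, $0\le v_0\le a$, which is \cite[Theorem 1.3]{COS} (equivalently \cite[Theorem 3.1]{COS}), stated for general bounded domains. The only delicate point — and the main, though essentially bookkeeping, obstacle — is to confirm that each imported tool (Lemma \ref{lemp}, the variational characterization of the principal eigenvalue, and the convergence theorem of \cite{COS}) is genuinely available on an arbitrary smooth bounded $\Omega\subset\mathbb{R}^n$ rather than only on an interval; all three are classical, and the references gathered in Section \ref{HD} supply them.
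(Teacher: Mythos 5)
Your proposal is correct and follows essentially the same route as the paper: the paper proves Theorem \ref{T5O} precisely by repeating the proof of Theorem \ref{T5} with $d\,\partial_{xx}$ replaced by $d\,\Delta$ and $\pi^2/L^2$ by $\lambda_0$, taking zero static controls, using the $n$-dimensional versions of Lemma \ref{lemp} and the Rayleigh-quotient argument for existence and uniqueness of $(u^{**},v^{**})$, and invoking the global stability result of \cite{COS} for the attraction of all admissible trajectories. Your bookkeeping remark about verifying the imported tools on a general smooth bounded $\Omega$ is exactly the point the paper addresses in Section \ref{HD}.
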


Theorem \ref{T9} also holds in the domain $\Omega$, and thus the finite-time controllability result towards $(u^*,v^*)$ obtained in Theorem \ref{T1F} can be stated in the same way in $\Omega$.

The similarity of the results for the cases $n=1$ and $n>1$ led us to consider the proofs in the one-dimensional case, mainly for two reasons. The numerical simulations implemented in the following section, which illustrate our results, are better conducted and interpreted in the case $n=1$; moreover, in this case, the role of the domain size with respect to the controllability of the system becomes evident. However, it is well known that the smallest eigenvalue of \eqref{ASO} continuously depends on $\Omega$  (see \cite{SMO,HA}, for instance). In particular, when $\Omega$ is convex, we have that
\begin{equation}
\label{HA}
c(n)/\rho_{\Omega}^2\leq\lambda_0\leq C(n)/\rho_{\Omega}^2
\end{equation}
where $c(n)$, $C(n)$ are constants that depend only on the dimension $n$ and  $\rho_{\Omega}$ is the {\it inradius} of $\Omega$; that is, the radius of the largest ball contained in $\Omega$, 
$$\rho_{\Omega}:=\sup\{r>0; \exists x\in\Omega,\ \  B(x,r)\subset\Omega\}.$$

The inequality \eqref{HA} can be seen in \cite[Theorem 7.75]{HA}. Finally, we observe from the theorems above that controllability occurs in $\Omega$ if it has a small inradius, and does not occur when the inradius is large. Theorem \ref{T5O} is an exception to this interpretation, as it is a consequence of the global stability of the problem with zero Dirichlet boundary conditions. This analysis is similar to the one-dimensional case, and it corresponds to what is expected for controls acting only on the boundary of the domain. A similar conclusion was recently obtained for scalar reaction-diffusion equations of the bistable type \cite{Z2}.

\section{Numerical simulations}\label{NS}

In this section, we validate some of the results obtained above by simulating the behavior of the solutions as we alter the relations between the interval length \( L \) and the parameters of the problem. Our main objectives are:

\begin{itemize}
    \item to compare the control strategies obtained with a computationally derived optimal control strategy, and
    \item  to visualize the action of barrier solutions inhibiting control for specific targets.
\end{itemize} 

The simulations were performed using the {\it Mathematica} software and the {\it Casadi} package in {\it MATLAB}.

\subsection{Controllability towards a state of coexistence.}

In Theorem \ref{T1}, we saw that if there exists a coexistence state \((u^*,v^*)\), then it is controllable. As demonstrated, the strategy is to use the solution of the corresponding problem with Neumann boundary conditions as the control. In Figure \ref{FCON}, we have a simulation with the following parameters: \(d_1=d_2=0.01\), \(k_1=0.8\), \(k_2=0.7\), \(a=1\), \(L=1\) and $t\in\{1,5,12,18\}$. In this case, we have \eqref{CC} and 
$$(u^*,v^*)=\left(\dfrac{1-k_1a}{1-k_1k_2},\dfrac{a-k_2}{1-k_1k_2}\right)\approx (0.45,0.68).$$

%\begin{figure}[ht!]
 % \centering
  % \includegraphics[width=0.6\linewidth]{FCON2.png}
   %  \captionsetup{font=small}
 %\caption{The solutions \(u\) (blue line) and \(v\) (red line) approach \(u^*\) (dashed blue line) and \(v^*\) (dashed red line), respectively.}
   %\label{FCON}
%\end{figure}

Additionally, we also show the behavior of the control $(c_u,c_v)$ at \(x=0\) and \(x=1\) for $t\in (0,18)$.

%\begin{figure}[ht!]
 % \centering
  % \includegraphics[width=0.35\linewidth]{FCONC2.png}
   %  \captionsetup{font=small}
 %\caption{The controls \(c_u\) (black line) and \(c_v\) (orange line) approach \(u^*\) (dashed black line) and \(v^*\) (dashed orange line), respectively.}
   %\label{FCONC}
%\end{figure}

\begin{figure}[ht!]
  \centering
   \includegraphics[width=0.99\linewidth]{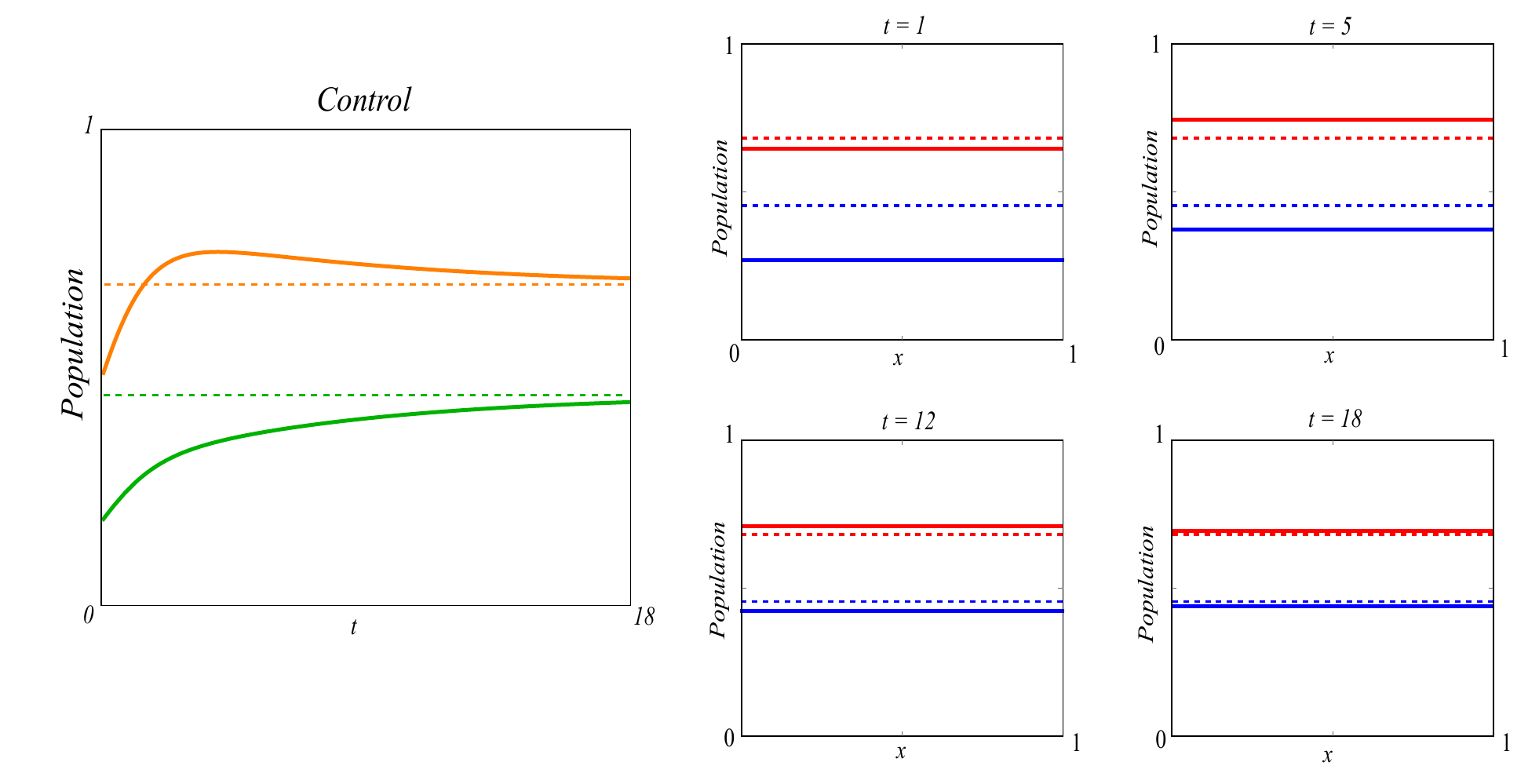}
     %\captionsetup{font=small}
 \caption{Controls \(c_u\) (green line) and \(c_v\) (orange line) approach \(u^*\) (dashed green line) and \(v^*\) (dashed orange line), respectively, for $x\in\{0,L\}$ (left). Solutions \(u\) (blue line) and \(v\) (red line) approach \(u^*\) (dashed blue line) and \(v^*\) (dashed red line), respectively  (right).
}
   \label{FCON}
\end{figure}

The initial condition assumed was \((u_0, v_0) = (0.2, 0.5)\), and we can observe that, for each fixed \(t\), the solutions arising from this strategy are constant functions of \(x\). This is not the case when we simulate the optimal control of the problem with the same parameters. Figure \ref{test2}  illustrates the behavior of the solutions and the controls.

\begin{figure}[ht!]
  \centering
   \includegraphics[width=0.99\linewidth]{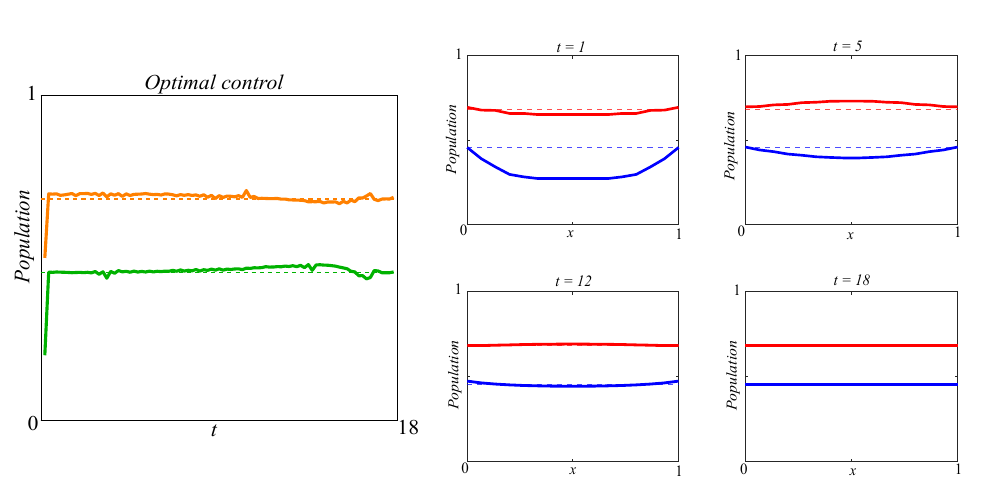}
     %\captionsetup{font=small}
 \caption{Optimal controls \(c_u\) (green line) and \(c_v\) (orange line) approach \(u^*\) (dashed green line) and \(v^*\) (dashed orange line), respectively, for $x\in\{0,L\}$ (left). Solutions \(u\) (blue line) and \(v\) (red line) approach \(u^*\) (dashed blue line) and \(v^*\) (dashed red line), respectively (right).}
   \label{test2}
\end{figure}

%\begin{figure}[ht!]
 % \centering
  % \includegraphics[width=0.7\linewidth]{FCOO2.png}
   %  \captionsetup{font=small}
 %\caption{The solutions \(u\) (blue line) and \(v\) (red line) approach \(u^*\) (dashed blue line) and \(v^*\) (dashed red line), respectively.}
 %  \label{FCOO}
%\end{figure}

%\begin{figure}[ht!]
 % \centering
  % \includegraphics[width=0.4\linewidth]{FCOOC2.png}
   %  \captionsetup{font=small}
 %\caption{The controls \(c_u\) (black line) and \(c_v\) (orange line) approach \(u^*\) (dashed black line) and \(v^*\) (dashed orange line), respectively.}
  % \label{FCOOC}
%\end{figure}

We observe that, although the controls behave similarly, and starting from the same initial state, the solutions approach the target \((u^*, v^*)\) in different ways.

%\FloatBarrier

\subsection{Controllability towards the survival of one of the species.}

The simulations in this case will be performed only for the target \((1,0)\). Everything could be analogously done for the target \((0,a)\).

Theorem \ref{T2} provides us with two distinct conditions for controllability towards $(1,0)$, each with a different strategy. In what follows, we simulate both strategies and also compare them with the optimal control obtained computationally.

The non-controllability results for $(1,0)$ and $(0,a)$ presented in Theorem \ref{T4} can also be simulated. The existence and visualization of the barrier solution allow us to interpret this phenomenon, which prevents the solutions from reaching these targets as long as the initial conditions are appropriately chosen. However, this analysis is similar to the one conducted for non-controllability towards $(0,0)$ (Subsection \ref{AEE}), so we will omit it here.

In the Figure \ref{test3}, we simulate the behavior of the solutions assuming the condition $a<k_2$, more precisely: $d_1=d_2=0.01$, $k_1=0.8$, $k_2=0.7$, $a=0.6$, $L=1$ for $t\in \{1,2,10,30\}$. The controls are derived from the associated Neumann problem, and their behaviors also can be seen in the Figure \ref{test3}.

%\FloatBarrier

\begin{figure}[ht!]
  \centering
   \includegraphics[width=0.8\linewidth]{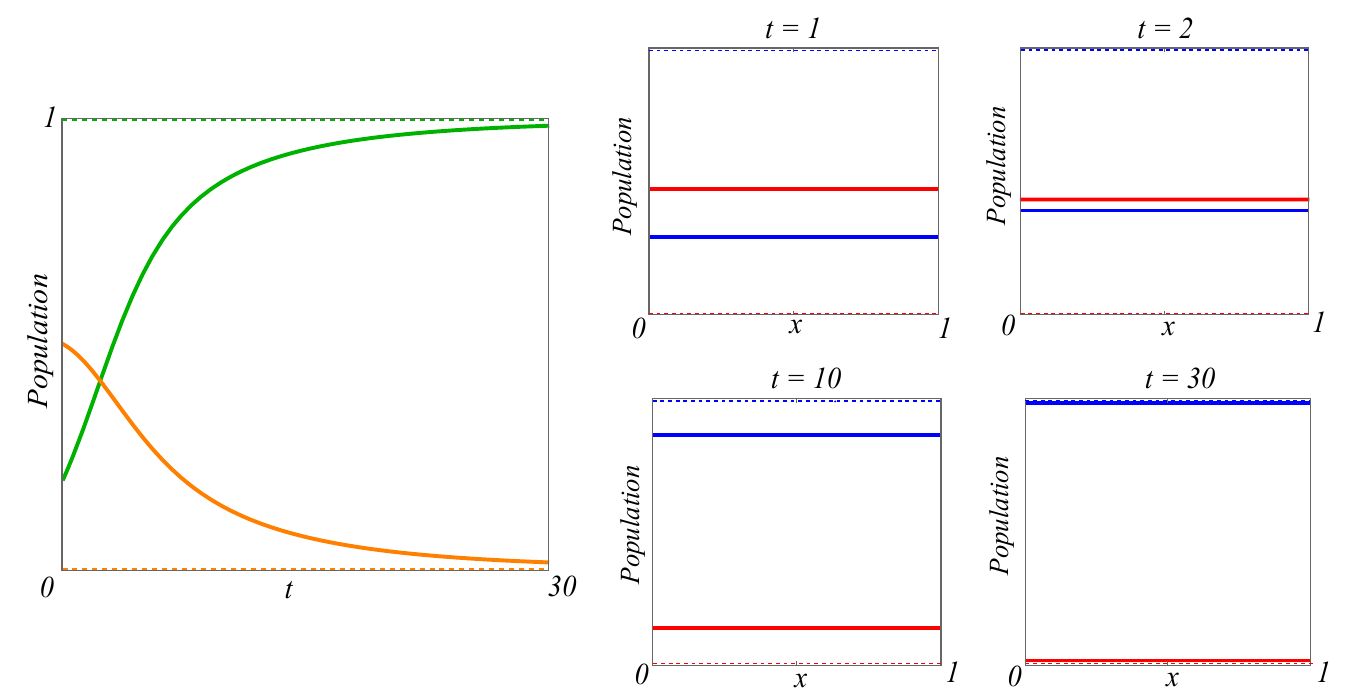}
     %\captionsetup{font=small}
 \caption{Controls \(c_u\) (green line) and \(c_v\) (orange line) approach $1$ and $0$, respectively, for $x\in\{0,L\}$ (left). Solutions \(u\) (blue line) and \(v\) (red line) approach $1$ (dashed blue line) and $0$ (dashed red line), respectively (right).
}
   \label{test3}
\end{figure}

%\begin{figure}[htbp!]
 % \centering
  % \includegraphics[width=0.57\linewidth]{FCUZN2.png}
   %  \captionsetup{font=small}
 %\caption{The solutions \(u\) (blue line) and \(v\) (red line) approach $1$ (dashed blue line) and $0$ (dashed red line), respectively (case $a<k_2$).}
   %\label{FCUZN}
%\end{figure}

%\begin{figure}[htbp!]
 % \centering
  % \includegraphics[width=0.33\linewidth]{FCUZNC2.png}
   %  \captionsetup{font=small}
 %\caption{The controls \(c_u\) (black line) and \(c_v\) (orange line) approach $1$ (dashed black line) and $0$ (dashed orange line), respectively (case $a<k_2$).}
   %\label{FCUZNC}
%\end{figure}

In the Figure \ref{FUZCC}, we simulate the case $L<\sqrt{d_2/a}\pi$ ($d_1=0.01$, $d_2=3$ $k_1=0.8$, $k_2=0.7$, $a=0.6$, and $L=1$) with constant controls $c_u=1$, $c_v=0$ and $t\in\{0.005,0.05,2,4\}$. We observe that convergence to the target is faster compared to the previous case, especially for the species $v$. In both scenarios, we assume initial conditions $(u_0,v_0)=(0.2,0.5)$.

\begin{figure}[htbp!]
  \centering
   \includegraphics[width=0.99\linewidth]{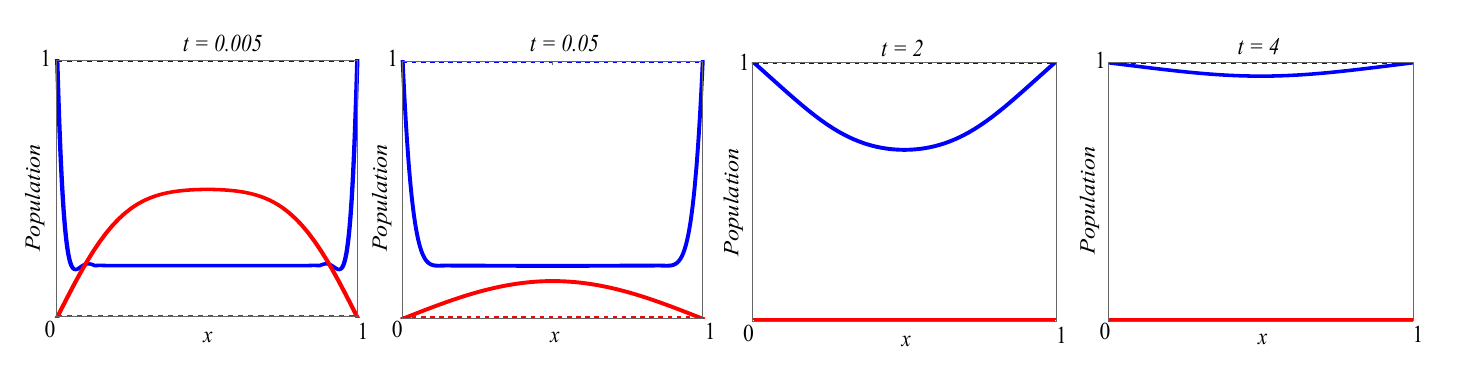}
     %\captionsetup{font=small}
 \caption{Solutions \(u\) (blue line) and \(v\) (red line) approach $1$ (dashed blue line) and $0$ (dashed red line), respectively (case $L<\sqrt{d_2/a}\pi$ and controls $c_u=1$ and $c_v=0$).
}
   \label{FUZCC}
\end{figure}

%\FloatBarrier

%\begin{figure}[htbp!]
 % \centering
  % \includegraphics[width=0.5\linewidth]{FUZCS.png}
   %  \captionsetup{font=small}
 %\caption{Solutions \( u \) (thin lines) and \( v \) (thin dashed lines) approach the target \((1,0)\) within the time  $t\in\{0.5,2,4,10\}$ under computationally obtained optimal control.}
   %\label{FUZCS}
%\end{figure}

Figure \ref{test5} shows the behavior of the solutions under the optimal control obtained for the target \((1,0)\) for $t\in\{0.5,2,4,10\}$ and the behavior of this optimal control.

%\begin{figure}[htbp!]
 % \centering
  % \includegraphics[width=0.4\linewidth]{FUZCO.png}
   %  \captionsetup{font=small}
 %\caption{Solutions \( u \) (thin lines) and \( v \) (thin dashed lines) approach the target \((1,0)\) within the time intervals $t\in\{0.5,2,4,10\}$ under computationally obtained optimal control.}
   %\label{FUZCO}
%\end{figure}

\begin{figure}[htbp!]
  \centering
   \includegraphics[width=0.9\linewidth]{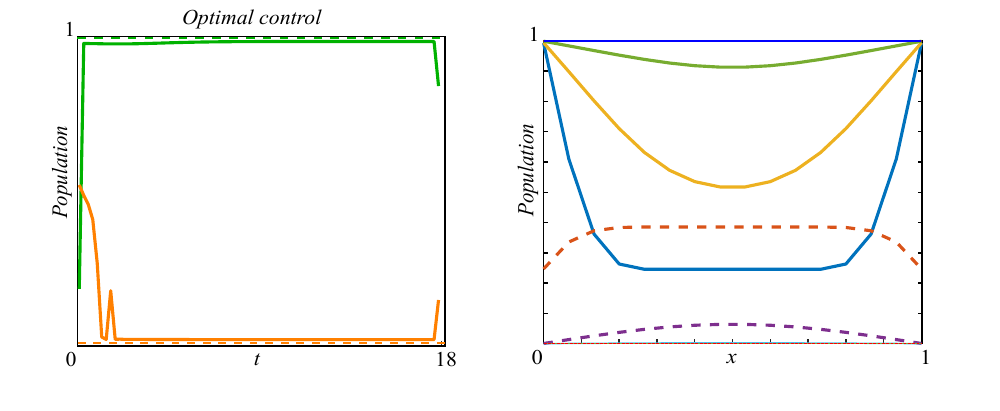}
     %\captionsetup{font=small}
 \caption{Solutions \( u \) (thin lines) and \( v \) (thin dashed lines) approach the target \((1,0)\) within the time intervals 
$t\in\{0.5,2,4,18\}$ (right), under the optimal controls  \(c_u\) (green line) and \(c_v\) (orange line) (left).
}
   \label{test5}
\end{figure}

The optimal control obtained above was achieved assuming \( L > \sqrt{d_2/a}\pi \). A similar behavior, not shown here, occurs when we assume \( a < k_2 \).

%\FloatBarrier

\subsection{Barrier solutions that prevent the extinction of one or both species.}\label{AEE}

In this subsection, we will simulate the case of species extinction. Simulations similar to the previous ones can be conducted for the controllability results we have in Theorem \ref{T3} $(i)$. However, we will dedicate this subsection to simulations related to non-controllability, specifically the existence of barrier solutions (Theorem \ref{T3} $(ii)$).

First, let us assume \(a<\dfrac{1}{k_1}\) and \(L>\sqrt{\dfrac{d_1}{1-ak_1}}\pi\), and then we will have the formation of a barrier solution that prevents \(u\) from approaching 0. Figure \ref{FBE} was generated with the parameters: \(k_1=0.8\), \(k_2=0.7\), \(a=1\), \(d_1=0.01\), \(d_2=4\), and \(L=1\).

%\begin{figure}[htbp!]
%  \centering
 %  \includegraphics[width=0.35\linewidth]{FBE.png}
  %   \captionsetup{font=small}
 %\caption{Solutions \(u\) (blue line), \(v\) (red line) and the barrier solution  (dashed black line) (case $\sqrt{d_1/(1-ak_1)}\pi<L<\sqrt{d_2/(a-k_2)}\pi$).}
   %\label{FBE}
%\end{figure}

\begin{figure}[htbp!]
  \centering
   \includegraphics[width=1\linewidth]{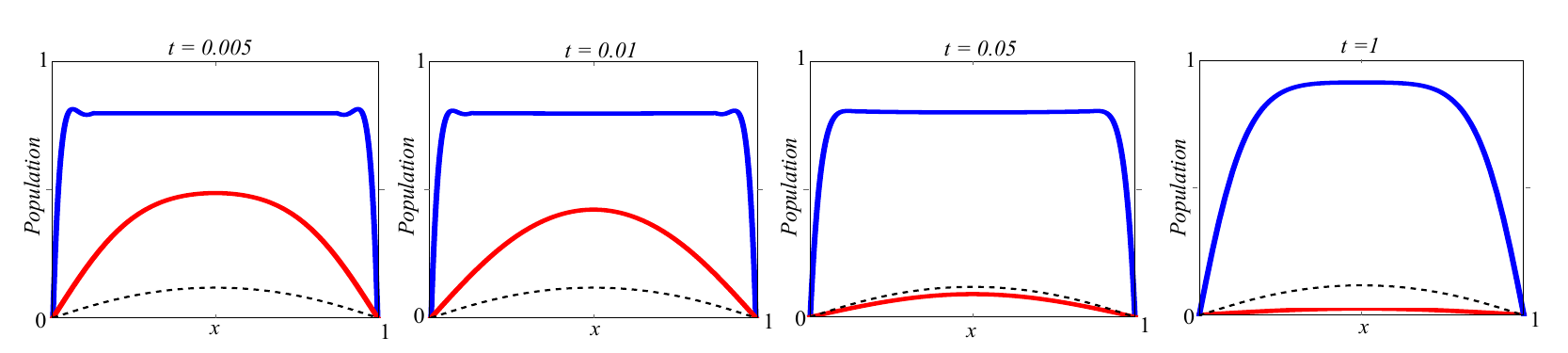}
    % \captionsetup{font=small}
 \caption{Solutions \(u\) (blue line), \(v\) (red line) and the barrier solution  (dashed black line) (case $\sqrt{d_1/(1-ak_1)}\pi<L<\sqrt{d_2/(a-k_2)}\pi$).
}
   \label{FBE}
\end{figure}

Note that, in this simulated case, the barrier solution only prevents the extinction of species \(u\) with the adopted initial condition, namely: \((u_0,v_0)=(0.8, 0.5)\) (\(u_0=0.8\) is greater than the barrier solution \(\eta(x)\), see \eqref{ET2}). This occurs because we simulated $\sqrt{d_1/(1-ak_1)}\pi<L<\sqrt{d_2/(a-k_2)}\pi$.

In this case, we used the control \((c_u,c_v)\equiv (0,0)\); however, the same phenomenon can be observed for any admissible control adopted. In other words, we can say that under these conditions, it is not possible to interfere with the populations on the boundary of the domain in such a way that leads species \(u\) to extinction.

Obviously, one can construct a barrier solution only for species \(v\), assuming  $$\sqrt{d_1/(1-ak_1)}\pi>L>\sqrt{d_2/(a-k_2)}\pi,$$ just as it is possible to construct barrier solutions simultaneously for \(u\) and \(v\) by assuming \eqref{HE}.
 In this case, with appropriate initial conditions taken, neither of the species will go extinct.

\begin{figure}[htbp!]
  \centering
   \includegraphics[width=0.99\linewidth]{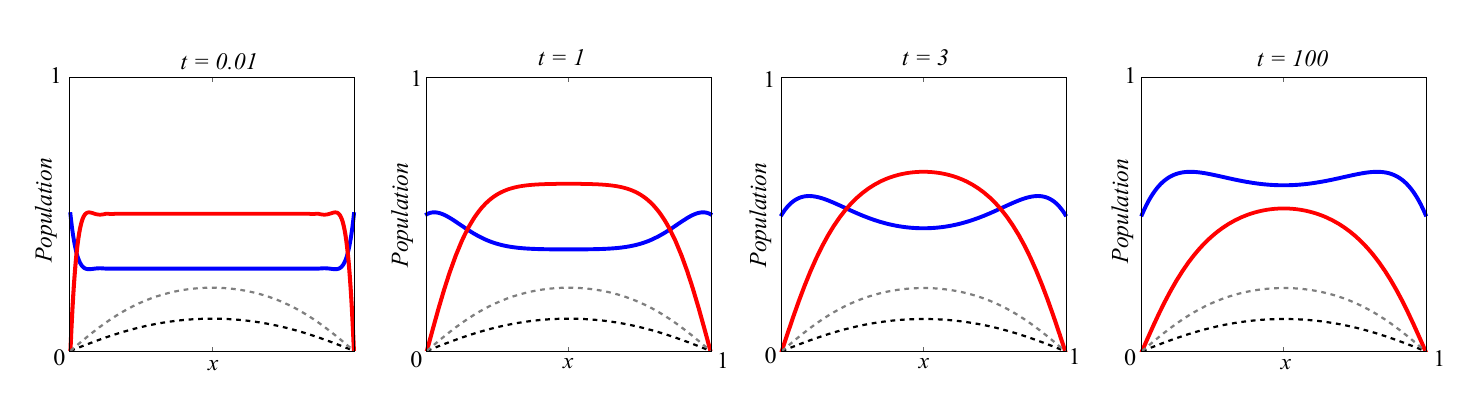}
     %\captionsetup{font=small}
 \caption{Solutions \(u\) (blue line), \(v\) (red line) and the barrier solutions to $u$ and $v$  (dashed black line and dashed gray line, respectively).
}
   \label{FBE2}
\end{figure}

%\FloatBarrier
In the above simulation, the adopted initial condition was \((u_0,v_0)=(0.3, 0.5)\). The barrier solutions represented by the dashed lines are \(\eta_1(x)\) (black), the solution of \eqref{ET2}, and \(\eta_2(x)\) (gray), the solution of \eqref{ET2S}.

\section{Concluding remarks and perspectives}\label{CR}

In this study, we have demonstrated how boundary control within an environment can be a significant factor in ecological management. We proved that, in the weak competition scenario considered, it is possible to steer solutions towards various proposed targets. The weak competition between species intuitively leads us to expect a viable situation of coexistence among them. In fact, the necessary condition for a homogeneous coexistence state  (see \eqref{DH}) is sufficient to control the system; this is what we proved in Theorem \ref{T1}.
We also found that it is possible to drive solutions towards states of a single species or even total extinction, provided the environment is sufficiently small or the parameters satisfy certain conditions (see Theorem \ref{T2} and Theorem \ref{T3} $(i)$).

Furthermore, it is important to highlight the controllability towards a heterogeneous coexistence  species $(u^{**},v^{**})$  (Theorem \ref{T5}).  Assuming the same reproduction rates and diffusion capacities for $u$ and $v$, the weak competition between the species in a sufficiently large domain, allows controllability to this target. As we have seen, this result is a consequence of the global stability of the system with zero Dirichlet boundary conditions.

Conversely, when controllability  is not achievable, we have shown explicitly the existence of barrier functions that prevent trajectories from approaching certain targets. These results can be found in Theorems \ref{T4} and \ref{T3} $(ii)$.

Controllability in finite time towards $(u^*,v^*)$ is always possible when this steady state exists. In fact, this is the content of Section \ref{FT}. This is achieved using constrains  controls  and is only possible because the target $(u^*,v^*)$ has its components satisfying $0<u^*<1$ and $0<v^*<a$.

 Finally, we highlight the extension of the results to higher-dimensional domains in Section \ref{HD}, as well as the various numerical simulations conducted in Section \ref{NS} that support the obtained results.

The issues addressed here naturally lead us to envision new possibilities. Small changes in problem \eqref{SP} significantly alter the model as well as its dynamics and results. Some of these possible changes, which could be studied in a similar manner to the one done in this work, are more detailed below.
\begin{itemize}
\item Strong competition between the species ($k_1, k_2 > 1$) or in relation to just one of the species (for example, $0 < k_1 < 1 < k_2$). Numerical simulations not shown here indicate similar controllability results for these cases. However, the proofs must necessarily differ from those presented here, as the condition \( k_1,k_2<1 \) is essential to our arguments.

\item We can consider only one species with diffusion capacity, assuming, for example, $d_1 > 0$ and $d_2 = 0$. This case could model the competition for soil nutrients between a fixed plant (without diffusion) and a fungus with spores that can move through the soil or air. 
Again, new arguments must be used in this case, as \( d_1,d_2 > 0 \) is essential for us.
\item Furthermore, other relationships between the species can be studied, such as predator-prey, by assuming, for instance, that \( k_1 > 0 \) and \( k_2 < 0 \).

\end{itemize}

Obviously, the model can be considered more general and/or more realistic. In this case, new techniques should be used, and we list some interesting possibilities below.

\begin{itemize}
\item It is certainly a great challenge to consider  the case with $3$ or more interacting species.
\item More realistic models present variable spatial diffusivity; in this case, we should consider a  non-constant diffusivity term,

\begin{equation}\label{SPCR}
\left\{\begin{array}{ll}
u_t=(\kappa_1(x) u_x(x,t))_x+u(1-u-k_1v),& (x,t)\in (0,L)\times\mathbb{R}^+\\
v_t=(\kappa_2(x) v_x(x,t))_x+v(a-v-k_2u),& (x,t)\in (0,L)\times\mathbb{R}^+\\
u(x,t)=c_u(x,t),& (x,t)\in\{0,L\}\times\mathbb{R}^+\\
v(x,t)=c_v(x,t),& (x,t)\in\{0,L\}\times\mathbb{R}^+\\
u(x,0)=u_0(x),\ \ v(x,0)=v_0(x),& x\in (0,L)
\end{array}\right.
\end{equation}
 with $\kappa_i(x) > 0$ for all $x$. The non-homogeneities $\kappa_i(x)$ interfere with the species diffusion capacity and bring new possibilities to the problem \cite{SHI}.
%\item Instead of Dirichlet controls at the boundary of the domain, Neumann controls can be considered. In this case, the aim is to control the flow of species across the boundary of the region to reach predetermined targets.
\item We can combine control strategies acting in the interior and on the boundary of the domain, for example
\begin{equation}\label{SP2CR}
\left\{\begin{array}{ll}
u_t=d_1u_{xx}+u(1-u-k_1v)+m_u(x,t),& (x,t)\in (0,L)\times\mathbb{R}^+\\
v_t=d_2v_{xx}+v(a-v-k_2u)+m_v(x,t),& (x,t)\in (0,L)\times\mathbb{R}^+\\
u(x,t)=c_u(x,t),& (x,t)\in\{0,L\}\times\mathbb{R}^+\\
v(x,t)=c_v(x,t),& (x,t)\in\{0,L\}\times\mathbb{R}^+\\
u(x,0)=u_0(x),\ \ v(x,0)=v_0(x),& x\in (0,L)
\end{array}\right.
\end{equation}
where $m_u$, $m_v$  are also controls that can represent species released into the environment.
%\item The same control problem can be proposed in finite time. In this case, the objective is to find controls \(c_u\) and \(c_v\) that drive the initial conditions \((u_0(x),v_0(x))\) to reach a target \((\overline{u},\overline{v})\) in finite time. This is an entirely different problem from the one proposed here and will certainly require more specific techniques from control theory, especially when the controls have constraints.

\item Finally, we can also study other aspects of this problem. For example, the optimal control simulations conducted here (see figures \ref{test2} and \ref{test5}) indicate the {\it turnpike phenomenon}, in which the optimal control remains close to a stationary state for most of the time. There is no theoretical analysis regarding this observation for the problem considered here. More details on this topic can be found in \cite{TZ,oc1} and references therein.
\end{itemize}

\section{Acknowledgements}
M. Sonego has been partially supported by the Conselho Nacional de Desenvolvimento Científico
e Tecnológico (CNPq), Grant/Award Number: 311893/2022-8; Fundação de Amparo à
Pesquisa do Estado de Minas Gerais (FAPEMIG), Grant/Award Number: RED-00133-21 and
CAPES/Humboldt Research Scholarship Program, Number: 88881.876233/2023-01.
E. Zuazua has been funded by the Alexander von Humboldt-Professorship program, the ModConFlex Marie Curie Action, HORIZON-MSCA-2021-DN-01, the COST Action MAT-DYN-NET, the Transregio 154 Project Mathematical Modelling, Simulation and Optimization Using the Example of Gas Networks of the DFG, AFOSR  24IOE027 project, grants PID2020-112617GB-C22, TED2021-131390B-I00 of MINECO and PID2023-146872OB-I00 of MICIU (Spain),
Madrid Government - UAM Agreement for the Excellence of the University Research Staff in the context of the V PRICIT (Regional Programme of Research and Technological Innovation).

%In this section, we validate some of the results obtained above by simulating the behavior of the solutions as we alter the relations between the interval length \( L \) and the parameters of the problem. 


\begin{thebibliography}{99}

\bibitem{sur}  Ammar-Khodja F,  Benabdallah A,  González-Burgos M, de Teresa L (2011) Recent results on the controllability of linear coupled parabolic problems: A survey. Mathematical Control and Related Fields 1(3): 267-306.

\bibitem{BJ}   Bonnard B, Rouot J (2023) Optimal Control of the Controlled Lotka–Volterra Equations with Applications. The Permanent Case, SIAM Journal on Applied Dynamical SystemsVol. 22, Iss. 4.



\bibitem{HA} Brasco L, Philippis G (2017) Shape optimization and spectral theory, chapter: Spectral inequalities
in quantitative forms. De Gruyter.

\bibitem{PB} Brown PN (1984) Decay to uniform states in ecological interactions, SIAM Journal on Applied Mathematics, Vol. 44, No. 6, 1112-1132.

\bibitem{COS}  Cosner C, Lazer AC (1984) Stable coexistence states in the Volterra-Lotka competition model with diffusion, SIAM J. Appl. Math.  Vol. 44, 1112-1132.

\bibitem{HIL}  Courant R, Hilbert D (1961) Methods of Mathematical Physics, Vol. 1, Interscience, New York.

\bibitem{oc2}  Crespo L, Sun J (2002) Optimal control of populations of competing species, Nonlinear Dyn. 27, 197–210.


\bibitem{PP} Goh BS,  Leitmann G, Vincent TL (1974) Optimal Control of a Prey-Predator System, Mathematical Biosciences 19, 263-286.

\bibitem{tra}  Hou X, Leung AW (2008) Traveling wave solutions for a competitive reaction–diffusion
system and their asymptotics, Nonlinear Analysis: Real World Applications 9, 2196-2213.

\bibitem{P1}  Hussey NW, Bravenboer L (1971) Control of pests in glass-house culture by the
introduction of natural enemies in Biological Control (C. B. Huffaker, ed.) Plenum
Press, New York, 195-216.

\bibitem{oc1}  Ibañez A (2017) Optimal control of the Lotka–Volterra system:
turnpike property and numerical simulations, Journal of Biological Dynamics 11:1, 25-41.

\bibitem{lida} Iida, M, Muramatsu, T, Ninomiya, H et al. (1998) Diffusion-induced extinction of a superior species in a competition system. Japan J. Indust. Appl. Math. 15, 233.

\bibitem{YY}  Kan-On Y, Yanagida E (1993) Existence of non-constant stable equilibria in competition-diffusion equations, Hiroshima Math J. 23, 193-221.
\bibitem{sur1}  Le Balc’h K (2019) Controllability of a 4×4 quadratic reaction-diffusion system. Journal of Differential Equations
Volume 266, Issue 6, 3100-3188.

\bibitem{null} Le Balc’h K (2019) Null-controllability of two species reaction-diffusion system with nonlinear coupling:
a new duality method. SIAM Journal on Control and Optimization,  57 (4), 2541-2573.

\bibitem{LE}  Leung A (1980) Equilibria and stabilities for competing-species, reaction-diffusion equations with Dirichlet boundary data, J. Math. Anal. Appl., 73, 204-218. 

\bibitem{LE2}  Leung A (1981) Stabilities for equilibria of competing-species reaction diffusion equations with homogeneous Dirichlet condition, Funkcialaj Ekvacioj 24, 201-210.



\bibitem{MAY}  May RM (1973) Stability and complexity in model ecosystems, Princeton University Press, Princeton. 

\bibitem{MSM}  Maynard-Smith J (1974) Models in Ecology, Cambridge University Press.

\bibitem{Z2}
Mazari I,  Ruiz-Balet D,  Zuazua E (2023) Constrained control of gene-flow models. Ann. Inst. H. Poincaré Anal. Non Linéaire 40  no. 3,  717–766.

\bibitem{SHI} Ni W, Shi J, Wang M (2020) Global stability of nonhomogeneous equilibrium solution for the diffusive Lotka–Volterra competition model. Calc. Var. 59, 132.

\bibitem{pao}  Pao CV (1981) Coexistence and Stability of a
Competition-Diffusion System in
Population Dynamics, Journal of Mathematical Analyss and Applications 83, 54-76.

\bibitem{PAO2}  Pao CV (2012) Nonlinear parabolic and elliptic equations, Springer, New York.

\bibitem{P2}  Parker FD (1971), Management of pest populations by manipulating densities of both
hosts and parasites through periodic releases in Biological Control (C. B. Huffaker,
Ed.) Plenum Press, New York, 365-376.

\bibitem{PGZ} Pighin D,  Zuazua E. (2018) Controllability under positivity constraints of semilinear heat equations. Mathematical Control and Related Fields, 8 (3 \& 4): 935-964.

\bibitem{Z1}
 Pouchol C, Trelat E,  Zuazua E (2019) Phase portrait control for 1D monostable and bistable reaction-diffusion equations. Nonlinearity IOP Publishing 32 (3), 884--909.


\bibitem{Z3} Ruiz-Balet D, Zuazua E (2020), Control under constraints for multi-dimensional reaction-diffusion monostable and bistable equations, Journal de Mathématiques Pures et Appliquées,
143, 345-375.


\bibitem{JS} Smillie J (1984) Competitive and cooperative tridiagonal systems of differential
equations. SIAM Journal on Mathematical Analysis 15(3), 530-534.

\bibitem{SMO} Smoller J (1994) Shock Waves and Reaction - Diffusion Equations, Springer 258, New  York.


\bibitem{MR} Sonego M,  Roychowdhury R (2023) A note on control of one-dimensional heterogeneous reaction-diffusion equations. Evolution Equations and Control Theory 12(2), 415-422.


\bibitem{PAY}  Stakgold I, Payne LE (1973) Nonlinear problems in nuclear reactor analysis, in Nonlinear Problems in the Physical Sciences and Biology, Lecture Notes in Mathematics 322, Springer, Berlin,  298-307.

\bibitem{tra2}  Tang MM, Fife PC (1980) Propagating Fronts for Competing Species Equations with Diffusion, Archive for Rational Mechanics and Analysis, vol. 73, no. 1, 69-77.

\bibitem{TZ} Trélat E, Zuazua E (2015) The turnpike property in finite-dimensional nonlinear optimal control,
J. Diff. Equ. 258, 81-114.




\bibitem{oc3} Yosida S (1982) An optimal control problem of the prey–predator system, Funck. Ekvacioj 25, 283–293.

\bibitem{ZPAO}  Zhou I, Pao CV (1982) Asymptotic behavior of a competition-diffusion system in populations dynamics, Nonlinear Analysis, Theory, Methods \& Applications, Vol. 6 No. 11, 1163-1184.
\end{thebibliography}
\end{document}